\documentclass[11pt]{amsart}

\usepackage[margin=1in, marginparsep=1pt,
            marginparwidth=124pt]{geometry}

\usepackage[english]{babel}
\usepackage[usenames,dvipsnames,svgnames,table]{xcolor}
\usepackage[pagebackref,linktocpage=true,colorlinks=true,linkcolor=Blue,citecolor=BrickRed,urlcolor=RoyalBlue]{hyperref}
\usepackage[alphabetic]{amsrefs}
\usepackage{skak} 

\usepackage[colorinlistoftodos,prependcaption,textsize=tiny]{todonotes}
\setlength{\marginparwidth}{2.2cm}


\usepackage{amsmath,amssymb,amsfonts,amsthm,enumerate,textcomp}
\usepackage{url}
\usepackage{graphicx}
\usepackage{xcolor}
\usepackage{wrapfig}

\usepackage{graphicx,stackengine,scalerel}
\def\tang{\ThisStyle{\abovebaseline[0pt]{\scalebox{-1}{$\SavedStyle\perp$}}}}

\usepackage{mathrsfs}

\usepackage{esint}
\usepackage{enumitem}

\theoremstyle{plain}
\newtheorem{thm}{Theorem}[section]
\newtheorem{defn}[thm]{Definition}
\newtheorem{lemma}[thm]{Lemma}
\newtheorem{prop}[thm]{Proposition}
\newtheorem{remark}[thm]{Remark}

\newtheorem*{m-thm}{Main Theorem}
\usepackage{amsmath}
\usepackage{amssymb}

\numberwithin{equation}{section}

\newcommand\R{\mathbb R} 
 
\renewcommand\S{\mathbb S} 

\newcommand{\diam}{{ \rm diam}}

\newcommand{\dist}{{ \rm dist}}
\newcommand\p{\partial}

\newcommand\e{\varepsilon}
\renewcommand{\epsilon}{\e}
\renewcommand{\div}{\mbox{div}}

\renewcommand\H{\mathscr H}
\newcommand\F{\mathscr F}

\newcommand{\I}[1]{\chi_{\{#1>0\}}}
\newcommand{\po}[1]{\{#1>0\}}
\newcommand{\fb}[1]{\partial\{#1>0\}}

\newcommand{\fbr}[1]{\partial_{\text{red}}\{#1>0\}}


\newcommand\Om{\Omega}
\newcommand\na{\nabla}
\newcommand\HD{{\rm{HD}}} 
\parskip=0.1cm

\baselineskip 26pt  


\usepackage{tikz}
\usetikzlibrary{arrows,shapes,positioning}
\usetikzlibrary{decorations.markings}
\tikzstyle arrowstyle=[scale=1]
\tikzstyle directed=[postaction={decorate,decoration={markings,
    mark=at position .65 with {\arrow[arrowstyle]{stealth}}}}]
\tikzstyle reverse directed=[postaction={decorate,decoration={markings,
    mark=at position .65 with {\arrowreversed[arrowstyle]{stealth};}}}]

\allowdisplaybreaks

\thanks{2010 Mathematics Subject Classification:
35R35.\\ Keywords:  free boundary, regularity theory,
monotonicity formula, free boundary conditions.}

\begin{document}

\title[Full and partial regularity for a class of nonlinear free boundary problems]{Full and partial regularity for a class of nonlinear free boundary problems}

\author{Aram L. Karakhanyan}
\address{Aram Karakhanyan:
School of Mathematics, The University of Edinburgh,
Peter Tait Guthrie Road, EH9 3FD Edinburgh, UK}
\email{aram6k@gmail.com}

\begin{abstract}
In this paper we classify the nonnegative global minimizers of the functional
\[
J_F(u)=\int_\Om F(|\na u|^2)+\lambda^2\I u, 
\]
where $F$ satisfies some structural conditions and $\chi_D$ is the characteristic function of 
a set $D\subset \R^n$. We compute the second variation of the energy and study the properties of the stability operator. 
The free boundary $\fb u$ can be seen as a rectifiable $n-1$ varifold. If the free boundary is a Lipschitz multigraph then 
we show that the  
first variation of this varifold is bounded and  use Allard's monotonicity formula to prove the existence of 
 tangent cones modulo a set of small Hausdorff dimension. In particular we prove that if $n=3$
and the ellipticity constants of the quasilinear elliptic operator generated by $F$ are close to 1 then the conical free boundary must be flat. 
\end{abstract}

\maketitle

\section{Introduction}

In this paper we consider the regularity of the global minimizers of the functional

\begin{equation}\label{eq:J-quasi}
J_F(u)=\int_\Om F(|\na u|^2)+\lambda^2\I u \to \min 
\end{equation}
over the class of admissible functions 
\[
 u\in \mathcal A=\{u\in W^{1, F}(\Om), u-u_0\in W^{1, F}_0(\Om)\}
\]
with $F\in C^{2, 1}[0, \infty)$
satisfying the structural  conditions
\begin{equation}\label{eq:F-cond}
c_0\le F'(t)\le C_0, \quad 0\le F''(t)\le \frac{C_0}{1+t}, 
\end{equation} 
for some positive constants $c_0, C_0$. Here $\Om\subset \R^n$ is a domain, $\lambda>0$ given constant, 
$W^{1, F}$ the Orlicz-Sobolev space generated by $F$, and $u_0\in W^{1, F}(\Om)$ given Dirichlet datum. 

If the free boundary $\fb u$ is smooth then $\na u$
satisfies the following implicit Bernoulli type condition
\begin{equation}
f(\na u)+\lambda^2-\na_\xi f(\na u)\na u=0, \quad f(\xi)=F(|\xi|^2).
\end{equation}
One can deduce that $|\na u|=\lambda^*$ on the free boundary, where 
$\lambda^*$ is determined from the implicit relation 
$$
\lambda^2=2F'(|\na u|^2)|\na u|^2-F(|\na u|^2).
$$
Throughout  this paper we assume $\lambda^2=2F'(1)-F(1)$ so that $|\na u|=1$ on $\fb u$, see Remark \ref{rem:Bernoulli}.

\begin{m-thm}
Let $u\ge 0$ be a global minimizer of \eqref{eq:J-quasi} with 
$F$ satisfying \eqref{eq:F-cond}.

\begin{itemize}
\item[(a)] If $n=3$ and $\fb u$ is a cone and 
\begin{eqnarray}\label{small-ellipt}
\sup F''(|\na u|^2)<\frac32{F'(1)}
\end{eqnarray}
then $u(x)=\max(x_1, 0)$ in some coordinate system.
\item[(b)] 
Suppose that
the rectifiable $n-1$ varifold $V=(\Gamma, \theta)$ associated with $\Gamma=\fb u$ 
has bounded first variation. 
If for some $\alpha\in(0, 1)$ and $\xi\in \Gamma$ 
\begin{equation}
\int_{B_\rho(\xi)}|H|\le  C(\xi, \alpha) r^{n-2+\alpha}, \quad r\in (0, R), \overline{B_R}(\xi)\subset U\subset \R^n, 
\end{equation} 
$n=3$ and 
\eqref{small-ellipt}
holds 
then $\Gamma$ is regular at $\xi$.
Moreover, the singular set has Hausdorff dimension $\le 1$. 
\end{itemize}

\end{m-thm}

The proof follows from the combination of Theorems \ref{thm:Bernstein} and \ref{thm:partial} proved below. When $F(t)=t^{\frac p2}, 2<p<5$ then our argument shows that 
\eqref{small-ellipt} is satisfied and the homogeneous free boundaries must be flat in $\R^3$. 
One of the open questions in the free boundary regularity theory asks whether the global minimizers of the Alt-Caffarelli functional \cite{AC} are flat in $\R^n, {n\le 7}$. 
Affirmative answer  is given for $n=2$ \cite{AC}, $n=3$ \cite{CJK}, $n=4$ \cite{JS}. For $n=7$ an analogue of the 
Simons  cone shows that non-flat global minimizers exist  for $n\ge 7$ \cite{JdS}. Our main results contributes in this direction for nonlinear $F$.

The paper is organized as follows: In section \ref{sec:2} we characterize the flat free boundary points
via the variational mean curvature. The free boundary is smooth at the flat points, see Theorem 6.1 \cite{ACF-quasi}. Our argument  shows that if the free boundary contains a smooth portion of minimal surface then it should be a plane. 

In section \ref{sec:3} we recall some well-known facts about varifolds. The main tool we need to 
prove our main theorem is the Allard monotonicity formula. For this we need to consider the first 
variation of the varifold constructed from $\Gamma$, and show that the first variation is finite for 
Lipschitz multi-graphs with nonnegative mean curvature using the 
weak convergence of mean curvature measure \cite{TW}.  
To every varifold with bounded first variation one can assign a Radon measure 
which in the case of smooth varifold agrees with the mean curvature vector, which is the 
trace of the second fundamental form.  If the relative density of this measure decays 
sufficiently fast at some point $\xi$ then it implies  existence of tangent cones. 
For the classical case \cite{AC} the existence of tangent cones follows from the  monotonicity 
formula of Weiss \cite{Weiss} which is a version of linearized radially symmetric entropy for minimal surfaces in $\R^n$.

Section \ref{sec:4} contains the main technical tool to be used in the proof 
of Main Theorem, the stability inequality \eqref{eq;2nd-var-ineq}. We follow the 
argument of \cite{CJK} closely. Our main task is to obtain 
the expansion  for the gradient term for perturbed function in the functional which  in the classical case c easily follows from divergence formula thanks to the fact that the Laplace operator is self-adjoint.   
We also discuss the related stability operator and provide some explicit computations in section 
\ref{sec:5}

In section \ref{sec:6} we prove our main theorem by combining the results we obtained in previous sections. Lastly, we prove a global result for smooth minimizers in $\R^2$ at the end of this section.  
 
 Finally, in Section \ref{sec:7} we construct a weak solution to the free boundary problem 
 such that its free boundary is a double cone. For simplicity we consider the case 
 $F(t)=t^{p/2}$. We prove that this solution is not a minimizer.  
\subsection*{Notation}
We fix some notation  be used throughout of paper: 
Let $u$ be a minimizer of \eqref{eq:J-quasi} then $\Om^+_u=\Om^+(u)=\po u$,  $\Om^+_R(u)=\Om^+(u)\cap B_R$, where $B_R$ is the open ball centered at the origin.
The free boundary is denoted by $\Gamma=\fb u$, and $\H^s$ is the $s$-dimensional 
Hausdorff measure.  $V=v(\Gamma, \theta)$ is the varifold associated with $\Gamma$
and we usually consider the portion of $\Gamma$ in some bounded subdomain $U\subset \R^n$
where $0\in \Gamma\cap U$.

\section{Variational mean curvature}\label{sec:2}
In this section we characterize the flat free boundary points in terms of small density of variational mean curvature of
$\fb u$. For every set $E$ of locally finite perimeter we can find an integrable function $H$ which is the 
variational mean curvature  of $E$ \cite{BGM}. 
\begin{lemma}
Let $u_0$ be a  global minimizer of \eqref{eq:J-quasi} then the free boundary $\fbr {u_0}$ is a generalized surface of non-positive outward mean curvature, i.e.
if $S\subset \fbr{u_0}$ and $S'\subset \po {u_0}$ such that $\p S=\p S'$ then 
\begin{equation}\label{eq:mincurv-H}
\H^{n-1}(S)\le \H^{n-1}(S').
\end{equation}
\end{lemma}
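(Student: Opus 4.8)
The plan is to reduce the area comparison \eqref{eq:mincurv-H} to a Gauss--Green identity on the region trapped between $S$ and $S'$, using that the minimizer $u_0$ is ``$\L$-subharmonic'' for the Euler--Lagrange operator $\L u:=\div\big(2F'(|\na u|^2)\na u\big)$. Indeed $\L u_0=0$ in $\po{u_0}$ (the Euler--Lagrange equation of $J_F$ inside the positivity set) while $u_0\equiv 0$ on $\Om\setminus\po{u_0}$, so the distribution $\L u_0$ is a nonnegative Radon measure carried by $\fbr{u_0}$. Testing $\L u_0\ge 0$ against smooth approximations of $\chi_{\po{u_0}}$ and using the Bernoulli condition $|\na u_0|=1$ on $\fbr{u_0}$ (so that $\na u_0=-\nu$ there, where $\nu$ is the outer unit normal of $\po{u_0}$), one identifies $\L u_0$ with $2F'(1)\,d\H^{n-1}$ on $\fbr{u_0}$. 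Equivalently, the vector field $V:=2F'(|\na u_0|^2)\na u_0$ is divergence free in $\po{u_0}$ and has the boundary trace $V=-2F'(1)\,\nu$ on $\fbr{u_0}$ from the positive side.

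Given $S\subset\fbr{u_0}$ and $S'\subset\po{u_0}$ with $\p S=\p S'$, let $W$ be the open region enclosed by $S\cup S'$. Since $S'$ lies in the open set $\po{u_0}$ and $S$ lies on its boundary, we have $W\subset\po{u_0}$ and $\overline{W}\cap\p\po{u_0}=\overline S$, so $V\in C^1(W)$, $\div V=0$ in $W$, and $V$ extends continuously up to $\p W$. The divergence theorem then gives
\[
0=\int_W\div V\,dx=\int_{S}V\cdot\nu\,d\H^{n-1}+\int_{S'}V\cdot\nu_W\,d\H^{n-1},
\]
where along $S$ the outer normal $\nu_W$ of $W$ agrees with $\nu$. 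By the boundary trace of $V$ the first term equals $-2F'(1)\,\H^{n-1}(S)$, whence
\[
2F'(1)\,\H^{n-1}(S)=\int_{S'}2F'(|\na u_0|^2)\,\na u_0\cdot\nu_W\,d\H^{n-1}\le\int_{S'}2F'(|\na u_0|^2)\,|\na u_0|\,d\H^{n-1}\le 2F'(1)\,\H^{n-1}(S'),
\]
the last inequality because $F'$ is nondecreasing (as $F''\ge 0$) together with the Lipschitz bound $|\na u_0|\le 1$. Dividing by $2F'(1)>0$ (positive by \eqref{eq:F-cond}) yields \eqref{eq:mincurv-H}, i.e. $\fbr{u_0}$ has non-positive outward variational mean curvature in the sense of \cite{BGM}.

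Two ingredients must be justified, and the first is the main technical point. Since a priori $\fbr{u_0}$ is only rectifiable, the identification of $\L u_0$ with $2F'(1)\,d\H^{n-1}$ on $\fbr{u_0}$, the boundary trace of $V$, and the Gauss--Green formula on $W$ have to be read in the measure-theoretic sense; this is legitimate because at $\H^{n-1}$-a.e.\ point of $\fbr{u_0}$ the blow-up of $u_0$ is the half-plane solution $x_n^+$, so $u_0\in C^1$ up to $\fbr{u_0}$ there with $|\na u_0|=1$, and $W$ is a set of finite perimeter with $\p^*W\subset S\cup S'$. The second ingredient, the Lipschitz bound $|\na u_0|\le 1$ attained on the free boundary, follows from the fact that $|\na u_0|^2$ is a subsolution of a linear elliptic equation in $\po{u_0}$ (obtained by differentiating $\L u_0=0$), so its maximum over $\overline{\po{u_0}}$ is attained on $\fbr{u_0}$, where it equals $1$; this is established for this class of functionals in \cite{ACF-quasi}. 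Finally, the hypothesis $\p S=\p S'$ is understood so that $S-S'$ bounds $W$, i.e.\ $\overline W$ meets $\fbr{u_0}$ exactly along $\overline S$; for a more general competitor chain one applies the estimate componentwise.
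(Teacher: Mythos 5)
Your proof is correct and follows essentially the same route as the paper: both arguments apply the divergence theorem to the divergence-free field $2F'(|\na u_0|^2)\na u_0$ on the region enclosed by $S$ and $S'$, use the Bernoulli condition $|\na u_0|=\lambda^*$ on $S$, and conclude via the gradient bound $|\na u_0|\le\lambda^*$ together with the monotonicity of $F'$ from \eqref{eq:F-cond}. Your additional remarks on the measure-theoretic reading of the Gauss--Green formula are a reasonable elaboration of what the paper handles by invoking the smoothness of $\fbr{u_0}$ from \cite{ACF-quasi}.
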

\begin{proof}
Recall that $\fbr{u_0}$ is relatively open subset of $\fb {u_0}$. Moreover, $\fbr{u_0}$
is smooth \cite{ACF-quasi}. Consequently in the domain $D$ bounded by $S$ and $S'$ 
we have   
\[
0
=
\int_D \mathcal L u_0
=
\int_D \div (2F'(|\na u_0|^2)\na u_0). 
\]
After applying the divergence theorem we get that 
\[
\lambda^*F'((\lambda^*)^2)\H^{n-1}(S)
=
\int_{S}|\na u_0|F'(|\na u_0|^2)
=
\int_{S'} F'(|\na u_0|^2)\p_\nu u_0.
\]
By Theorem 4.1 \cite{ACF-quasi} we have that $|\na u_0|\le \lambda^*$ in $\R^n$, thus we conclude from the last identity and \eqref{eq:F-cond} that 
\[
\H^{n-1}(S)\le \H^{n-1}(S').
\]
Suppose that $x_0\in S$ and choose the coordinate system near $x_0$ so that 
$x_n$ points in the direction of the outer normal to $\po {u_0}$ and $x'=(x_1, \dots, x_{n-1})$. 
Let $S$ be the graph $x_n=f(x')$ near $x_0$, where $f$ is some smooth function 
over $B^{n-1}_r(x_0)=\{x'\in \R^{n-1}\ :\ |x'-x_0'|<r\}$ for some small $r>0$.
Then \eqref{eq:mincurv-H} can be rewritten in the following equivalent form 
\[
\int_{B^{n-1}_r(x_0)}\sqrt{1+|\na_{x'} f|^2}\le \int_{B^{n-1}_r(x_0)}\sqrt{1+|\na_{x'} (f-\e\phi)|^2}
\]
for every $0\le \phi\in C_0^\infty(B^{n-1}_r(x_0))$ and $\e>0$ small. This implies that 
\[
\int_{B^{n-1}_r(x_0)}\frac{\na_{x'}f}{\sqrt{1+|\na_{x'}f|^2}}\na \phi\ge 0.
\]
Therefore $\div_{x'}\left(\frac{\na_{x'}f}{\sqrt{1+|\na_{x'}f|^2}}\right)\ge 0$ and the mean curvature of 
$S$ is nonnegative. 
\end{proof}

For every set of finite perimeter $E\subset \R^n$ it is possible to find an integrable function $H$, 
called the variational mean curvature, such that $E$ minimizes the functional 
\begin{equation}\label{eq:F-def}
\F(F, U)=\int_U |D\chi_F|+\int_{U}\chi_F(x) H(x)dx.
\end{equation}
More precisely we have 
\begin{defn}\label{def:H}
A set $E$ is said to have variational mean curvature $H$ in $U$ if 
\begin{itemize}
\item[(i)] $\int_V |D\chi_E|<\infty, \forall V\Subset U$, 
\item[(ii)] $\F(E, U)\le \F(F, U)$  $\forall V\Subset U$, $\forall F\subset U$ such that 
$(E\setminus F)\cup(F\setminus E)\Subset V$. 
\end{itemize}
\end{defn}

If $E$ is  a set of finite perimeter then we can construct $H$ as follows:
take a measurable function $h\ge0$ such that $\int_E h<\infty$ and 
$\int_F h=0$ iff $|F|=0$. For $\sigma\ge 0$  and $F\subset E$, consider the functional 
\[
\mathcal B_\sigma(F)=\int_{\R^n}|D\chi_F|+\sigma\int_{E\setminus F}h.
\]
Then for the minimizing sets we have $E_\sigma\subset E_\mu$ if $0\le \sigma<\mu$,  
and $\cup_{\sigma}E_\sigma=E$. 
By defining 
\[
H(x)=-\inf\{\sigma h(x), x\in E_\sigma, \sigma\ge 0\}, \quad \forall x\in E
\]
we obtain the desired variational mean curvature function $H$ defined on $E$.
Arguing as above we can define $H$ on $\R^n\setminus E$ analogously. 

\begin{remark}
The variational mean curvature is not unique. As the construction above shows 
it depends on the choice of the weight function $h$. 
An interesting choice is $h(x)=\dist (x, \p E)$ \cite{Almgren}.

Then for given $\sigma$ we have $E_\sigma$ with $\p E_\sigma=\Sigma_{reg}+\Sigma_s$
such that $\Sigma_{reg}$ is $C^{2, \alpha}$ surface and if we denote 
$M_\sigma=\p E_\sigma$ then $H_\sigma(x)=\sigma h(x)\nu (x)$ on regular part and 
\[
\int_{M_\sigma}\div X=-\sigma\int_{M_\sigma} h(x)X\cdot \nu.
\]
Moreover, as $\sigma\to \infty$ we have weak converges of measures 
provided that $\p E$ is a $C^{1, \alpha}$ graph with nonnegative mean curvature \cite{BGM}. 
\end{remark}

Next we want to characterize the flat points of $\fb u$ via $H$. Recall 
that $x_0\in \fb u$ is said to be $\e$ flat in $B_r(x_0)$ if 
\begin{equation}\label{blyangtrs}
\inf_\nu\{h : \partial\{u>0\}\cap B_r(x_0)\subset S(h; x_0, \nu)\cap B_r(x_0)\}<\e r, 
\end{equation}
where 
\begin{equation}
\mathcal S(h; x_0, \nu):=\{x\in \R^n : -h<(x-x_0)\cdot\nu<h\}
\end{equation}
is  the slab of height $2h$ in unit direction $\nu$. 
An equivalent form of \eqref{blyangtrs} is
\[
\HD\Big(\fb u\cap B_r(x_0), \Pi \Big)< \e\, r \quad \mbox{for some}\  \ n-1\ \mbox{dimensional plane}\ 
  \Pi\ \mbox{ passing through}\ x_0, 
\]
where 
\begin{equation*}
\HD(A, B):=\max\left\{\sup\limits_{a\in A}\dist(a, B),\;  
\sup\limits_{b\in B}\dist(b, A)\right\}
\end{equation*}
is the Hausdorff distance of two sets $A, B$. 
\begin{thm}\label{thm:mean-curv}
Let $u$ be a global minimizer of  \eqref{eq:J-quasi}. Then for every $\e$ there are $\delta>0, r_0>0$ such that 
if $\frac1{r^{n-1}}\int_{B_{r}(x_0)}|H|<\delta , r<r_0$ then $B_{r/2}(x_0)\cap \fb {u_0}$ is $\e$ flat.
\end{thm}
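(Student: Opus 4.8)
The plan is to argue by contradiction, using a blow-up/compactness argument that reduces the statement to the rigidity provided by the first Lemma of this section (non-positive outward mean curvature forces flatness of minimal-type portions). Suppose the conclusion fails: there is $\e_0>0$, a sequence $r_j\to 0$ (or at least $r_j<r_0$ with $r_0$ arbitrarily small), points $x_j\in\fb{u_j}$ with $u_j$ global minimizers, such that $r_j^{-(n-1)}\int_{B_{r_j}(x_j)}|H_j|<\delta_j\to 0$ but $B_{r_j/2}(x_j)\cap\fb{u_j}$ is not $\e_0$-flat. Translate so that $x_j=0$ and rescale: set $v_j(x)=u_j(r_j x)/r_j$. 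Then $v_j$ is again a global minimizer of the rescaled functional $J_{F_j}$ with $F_j(t)=F(t)$ (the functional is scale-invariant once $\lambda^2=2F'(1)-F(1)$ is fixed), $0\in\fb{v_j}$, and the variational mean curvature of $\fb{v_j}$ in $B_1$ has $\int_{B_1}|H_{v_j}|=r_j^{-(n-1)}\int_{B_{r_j}}|H_{u_j}|<\delta_j\to 0$. Meanwhile $B_{1/2}\cap\fb{v_j}$ is still not $\e_0$-flat in the Hausdorff-distance sense.

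Next I would extract a limit. By the uniform Lipschitz bound $|\na v_j|\le\lambda^*$ (Theorem 4.1 of \cite{ACF-quasi}) and the nondegeneracy estimates for minimizers of \eqref{eq:J-quasi}, the $v_j$ are uniformly bounded in $C^{0,1}_{loc}$, so along a subsequence $v_j\to v_0$ locally uniformly, $v_0\ge 0$ is a global minimizer of $J_F$, $0\in\fb{v_0}$, and the free boundaries converge in Hausdorff distance on $B_{1/2}$ (nondegeneracy prevents the positivity set from collapsing, and the uniform Lipschitz bound prevents it from filling in). Because $\int_{B_1}|H_{v_j}|\to 0$, the limiting reduced free boundary $\fbr{v_0}$ has variational mean curvature zero in $B_{1/2}$; by Definition \ref{def:H} this says $\fbr{v_0}$ is a critical point — indeed, by the first Lemma of this section applied to $v_0$, it is a generalized surface of non-positive outward mean curvature, and with $H\equiv 0$ both inequalities \eqref{eq:mincurv-H} hold, so $\fbr{v_0}$ is genuinely area-minimizing (a minimal surface) in $B_{1/2}$. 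By the regularity theory of \cite{ACF-quasi} (Theorem 6.1) the reduced boundary is smooth and dense in $\fb{v_0}$, so in fact $\fb{v_0}\cap B_{1/2}$ is a smooth minimal hypersurface through the origin.

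Finally I would close the contradiction. A smooth minimal hypersurface passing through $0$ is, after zooming in, as flat as we like near $0$; but more to the point I can invoke the clean-up already announced in the introduction ("if the free boundary contains a smooth portion of minimal surface then it should be a plane") — equivalently, the first Lemma shows the reduced free boundary of a global minimizer with vanishing $H$ satisfies the minimal surface equation with the extra one-sided comparison \eqref{eq:mincurv-H}, which by the strong maximum principle forces $\fb{v_0}$ to be an entire hyperplane through $0$. In particular $B_{1/2}\cap\fb{v_0}$ lies in an $(n-1)$-plane, hence is $\e_0/2$-flat, and by Hausdorff convergence $B_{1/2}\cap\fb{v_j}$ is $\e_0$-flat for $j$ large — contradicting the choice of the sequence. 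The main obstacle, and the step that needs the most care, is justifying that $\int_{B_1}|H_{v_j}|\to 0$ indeed passes to the limit as $H_{v_0}\equiv 0$ in the variational sense of Definition \ref{def:H}: one must control the variational mean curvature under the blow-up, using the weak convergence of the mean-curvature measures (as in \cite{BGM}, \cite{TW}) together with the lower semicontinuity of perimeter, so that the comparison inequality \eqref{eq:F-def} for $v_j$ with small $\int|H_{v_j}|$ descends to the equality case for $v_0$; the rest is standard compactness for minimizers of the Alt--Caffarelli-type functional.
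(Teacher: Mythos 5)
Your blow-up scheme is the same as the paper's: rescale by the bad radii, use Lipschitz/nondegeneracy compactness plus BV compactness to pass to a limit $v_0$ whose free boundary is a perimeter minimizer on which the non-$\e_0$-flatness persists, and then derive a contradiction from a rigidity statement for global minimizers whose free boundary contains a smooth minimal piece. Up to and including the conclusion that the limit is a generalized minimal surface, your argument matches the paper (the paper passes to the limit directly in the comparison inequality $\int|D\chi_{E_k}|+\int H_k\chi_{E_k}\le\int|D\chi_F|+\int H_k\chi_F$ using lower semicontinuity of perimeter and $\int_{B_1}|H_k|\to 0$; the ``weak convergence of mean curvature measures'' machinery you worry about at the end is not needed for this step).

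The genuine gap is the final rigidity step. You assert that the one-sided comparison \eqref{eq:mincurv-H} together with the minimal surface equation ``by the strong maximum principle forces $\fb{v_0}$ to be an entire hyperplane.'' No maximum principle applied to the surface alone can do this: a perimeter-minimizing boundary through the origin need not be a plane (in low dimensions it is, by Bernstein/Simons-type theorems, but that is not what you invoke, and the paper does not rely on it either). Relatedly, density of $\fbr{v_0}$ in $\fb{v_0}$ does not give you that all of $\fb{v_0}\cap B_{1/2}$ is smooth, only that some relatively open smooth piece $S\subset\fbr{v_0}$ exists. The rigidity actually comes from the \emph{state function}, not the surface: on $S$ one has the overdetermined data $|\na v_0|=1$, and the paper sets $w=1+\p_e v_0$ (with $e$ the normal at a point $y_0\in S$), which solves the linearized equation $\div(a_{ij}\na w)=0$ and attains an extremum at $y_0$. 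The free boundary condition gives $u_nu_{in}=0$, vanishing mean curvature gives $\sum_{i<n}u_{ii}=0$, and the interior equation then forces $u_{nn}(y_0)=0$, i.e.\ $\p_\nu w(y_0)=0$, contradicting Hopf's lemma unless $w$ is constant. This yields $v_0=x_n+g(x')$, and two further Hopf arguments force $g\equiv 0$, i.e.\ $v_0=x_n^+$, which is what contradicts the non-flatness at scale $1$. Without some version of this Hopf-lemma argument on $\p_e v_0$ your proof does not close.
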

\begin{proof}
Suppose there exists $\e_0>0$ and sequences $\{\delta_k\}_{k=1}^\infty, \delta_k\to 0, \{r_k\}_{k=1}^\infty, r_k\to 0, \{x_k\}_{k=1}^\infty, x_k\in \fb u$ such that 

\begin{equation}
\frac1{r_k^{n-1}}\int_{B_{r_k}(x_0)}|H|<\delta_k
\end{equation}
but 
\begin{equation}\label{eq:non-flat-k}
 \mbox{HD}(\fb u\cap B_{r_k}(x_k), \Pi)\ge \e_0 r_k, \quad \forall \ n-1\ \mbox{dimensional plane}\ 
  \Pi\ \mbox{ passing through}\ x_k.
\end{equation}

Let $E_k$ be the free boundary of $u_k(y)=u(x_k+r_k y)/r_k$ and set 
$H_k(y)=r_k H(x_k+r_k y)$. Then we have 
\[
\int_{B_1}|D\chi_{E_k}|+\int_{B_1}H_k(y)\chi_{E_k}
\le
\int_{B_1}|D\chi_{F}|+\int_{B_1}H_k(y)\chi_{F} \quad (E_k\setminus F)\cup (F\setminus E_k)\Subset V.
\] 
Observe that $0\in E_k$ and $\int_{B_1}|H_k|<\delta_k$. Moreover, 
there is a subsequence $k_j\to \infty$ such that $u_{k_j}\to u_0$ uniformly in $C^{\alpha}_{loc}$, $\na u_{k_j}\to \na u_0$ weakly star in $L^\infty_{loc}$ and, moreover, 
by (3.11) \cite{ACF-quasi} $E_{k_j}=\fb {u_{k_j}}$ converge to $E_0=\fb{u_0}$ in Hausdorff distance. 
Using a customary 
 compactness argument for BV functions (e.g. Theorem 6.3 \cite{Simon}) we can extract a 
 subsequence, still labelled $E_{k_j}$ such that $\chi_{E_{k_j}}\to \chi_{E_0}$ in $BV(B_1)$.
It is easy to see that $(E_0\setminus F)\cup (F\setminus E_0)\Subset V$ thanks to the convergence of 
$E_{k_j}$ to $E_0$ in 
Hausdorff distance. Consequently 
\[
\int_{B_1}|D\chi_{E_0}|
\le
\int_{B_1}|D\chi_{F}| \quad (E_0\setminus F)\cup (F\setminus E_0)\Subset V.
\] 
Therefore $\fb {u_0}$ is a generalised minimal surface. Note that \eqref{eq:non-flat-k}
translates to 
\begin{equation}\label{eq:non-flat-0}
\mbox{HD}(\fb {u_0}\cap B_{1}(0), \Pi)\ge \e_0 , \quad \forall \ n-1\ \mbox{dimensional plane}\ \Pi \ \mbox{passing through}\ 0.
\end{equation}

If $u_0$ is the limit as above then $\fb{ u_0}$ contains a smooth piece $S$ of a minimal surface, because 
$\fbr {u_0}$ is relatively open in $\fb {u_0}$, see Theorem 6.2  \cite{ACF-quasi} . 
Let $e$ be the unit outer normal at some $y_0\in S$ such that 
$y_0\in \fbr {u_0}$. Introduce  $w(x)=1+\p_e u_0$, then differentiating 
$\mathcal L u_0=0$ in $e$ direction we get that $\div(a_{ij}\na w)=0$, where 
$a_{ij}=F'(|\na u_0|^2)\delta_{ij}+2F''(|\na u_0|^2\p_iu_0\p_ju_0)$ is a uniformly elliptic matrix thanks to 
assumptions \eqref{eq:F-cond}. Since $u_0$ is 
smooth near $y_0\in S$ we see that 
$a_{ij}w_{ij}+b_iw_i=0$ where $b_i=\sum_j \p_j a_{ij}$. From Hopf's lemma 
$\p_e w(y_0)\not =0$.  Choose the coordinate system at $y_0$ such that $e$  is pointing in $x_n$ direction. From  $|\na u_0|^2=1$ near $y_0$ on $S$ we infer that  
$u_n u_{i n}=0$ for every $i=1, \dots, n-1$. In this coordinate system the mean curvature of 
$S$ at $y_0$ is $\sum_{i=1}^{n-1}\p_{ii} u_0=0$. This in conjunction with the equation 
$a_{ij}u_{ij}=0$ yields that $u_{ee}(y_0)=u_{nn}(y_0)=0$.    
However,   $\p_n w= u_{nn}=0$ at $y_0$ and this is in contradiction with Hopf's lemma.
Thus $u_0=x_n+g(x'), x'=(x_1, \dots, x_{n-1})$ in $\po {u_0}$ for some function $g$. 
The free boundary condition implies that $|\na g|=0$ on $\fbr {u_0}$. Since 
$g$ is continuous it follows that  $g$ is constant on $\fb {u_0}$. Evidently $g$ solves the 
equation $\mathcal L g=F'(1+|\na g|^2)\Delta g+2F''(1+|\na g|^2)\na gD^2 g\na g=0$ in $\po{u_0}$.
Thus without loss of generality we can assume that  $g=|\na g|=0$ on $\fb{u_0}$
and $\mathcal L g=0$ in $\po{ u_0}$. Suppose that there is $z_0\in \fbr{u_0}$ such that 
$g\ge 0$ (or $g\le 0$) near $z_0$. Then Hopf's lemma implies that $g$ vanishes identically.  Consequently $g=0$ and $u_0=x_n^+$. But this is in contradiction with \eqref{eq:non-flat-0}.
\end{proof}

\begin{remark}
Let $H\in L^p$ be the variational mean curvature in the sense of
Definition  \ref{def:H}.
If $p=n$ then the boundary of $E$ is $C^\alpha, \alpha\in(0, 1)$ away from a set of dimension $n-7$.
If $p>n$ then $\p E$ is almost minimal. If 
$1\le p<n$ we cannot expect any regularity \cite{BGM}. 
\end{remark}
\section{Mean curvature measure}\label{sec:3}
In this section we introduce some basic facts about varifolds and 
show that under some conditions the free boundary $\Gamma$ equipped with $\H^{n-1}$ as weight measure
becomes a rectifiable $n-1$ varifold.  We also introduce Allard's monotonicity formula \cite{Allard}
in order to show the existence of tangent cones. 
\subsection{First variation of varifold}
Recall that by Theorem 6.2 \cite{ACF-quasi} $\Gamma$ is rectifiable. Then we define the 
rectifiable $n-1$ varifold $V= v(\Gamma,  \theta)$ (with multiplicity $\theta$) as the 
equivalence class of all pairs $(\widetilde\Gamma, \widetilde \theta)$ such that 
$\widetilde\Gamma$ is $n-1$ rectifiable, $\H(\Gamma\triangle\widetilde \Gamma)=0$ and 
$\theta=\widetilde \theta$ a.e. on $\Gamma\cap \widetilde\Gamma$ \cite{Simon} page 77.
Then the weight measure is $\mu_V=\H^{n-1}\with \theta$. 
We say that $V$ has bounded first variation if there is a constant $c>0$ such that 
\begin{equation*}
\sup_{X\in C^{1}_0(U), |X|\le 1}\left|\int \div Xd\mu_V\right|\le c.
\end{equation*}

By the Riesz represenation theorem there is a vector measure  $H$ such that 
\begin{equation*}
\int \div X=-\int H\cdot X.
\end{equation*}

Suppose $V$ is smooth then $X=X^{\tang}+X^\perp$,  where $X^{\tang} $  and $X^\perp$ are 
 the tangential and normal components of $X$, respectively.  
From the divergence theorem it follows that $\int\div X^{\tang}=0$. Writing $X^\perp=E n, E=X\cdot n, $ we get 
\begin{eqnarray*}
\int\div X
&=&
\int \div(X^\perp)=\int \tau_i\cdot \na_{\tau_i}(En)\\
&=&
\int\tau_i\cdot 
\left[ \na_{\tau_i}E n+E\na_{\tau_i} n\right]=\int A(\tau_i, \tau_i)(X\cdot n)\\
&=&
-\int H\cdot X,
\end{eqnarray*}
where $A$ is the second fundamental form, $\tau_1,\dots, \tau_{n-1}$ is
any orthonormal basis for the tangent space,  and $H$ is the { mean curvature} measure. 

\subsection{Lipschitz multigraphs}
The mean curvature measure can be prescribed to a class of graphs of semicontinuous functions which 
are subsolutions to the mean curvature equation in viscosity sense \cite{TW}.  
Suppose that $\{f_\alpha\}$ is a family of Lipschits continuous functions $f_\alpha:B'_1\to \R, f_\alpha(0)=0$ such that 
$|f_\alpha(x')-f(y')|\le L|x'-y'|, x', y'\in B_1'$ for some $L>0$, where 
$B_1'=\{|x'|<1\}, x=(x', x_n)$. Suppose that at  $x'\in B'_1$ we have $f_\alpha(x')=f_\beta(x')$ for some 
$\alpha\not=\beta$. In other words, $(x', f_\alpha(x'))$ is a point of self intersection of the free boundary.
Then the set of such points  has zero  $n-2$ dimensional Hausdorff measure. To prove the claim it is enough to show that 
$(x',f_\alpha(x'))\not \in \fbr u$. Otherwise there is a blow-up $u_0$ limit of $u$ at $(x',f_\alpha(x'))$ such that 
the free boundary $\fb {u_0}$ contains a plane, hence it follows from the proof of Theorem \ref{thm:mean-curv} that at $0$ the free boundary  $\fb {u_0}$ cannot have self intersection.

As a corollary we can show that the number of components of $\fb u$ near $0$ is bounded: Suppose 
$\fb u \cap Q=\cup_\alpha Graph_{f_\alpha}$ where we denote $Q=B_1'\times [-1, 1]$ such that $f_\alpha(0)=0$.
Then we have from \cite{ACF-quasi} Theorem 3.2 that there is a constant $C$ such that 
\[
C\ge \H^{n-1}(\fb u\cap Q)\ge \sum_{\alpha=1}^N\int_{B_1'}\sqrt{1+|\na_{x'}f_{\alpha}|^2}\ge N \H ^{n-2}(B_1').
\]  
Hence $N$ is bounded.
\medskip

There is a subtle approximation argument 
\cite{TW} Theorem 5.1 and Remark 5.1 that allows to construct 
an approximating sequence $f^k_\alpha$ for which 
$\mathcal Mf^k_\alpha\ge \sigma_k$ and $\sigma_k\to 0$ 
boundedly. 

Using this argument and weak continuity of 
the first variations $\delta V_k$ for $f^k_\alpha$  
we can show that $V$ has bounded first variation. Indeed, we have

\begin{eqnarray*}
\left|\int_{Graph_{f^k_\alpha}}\div X\right|
&=&
\left|\int_{B_1'}\sqrt{1+|\na f_\alpha^k|^2} \mathcal Mf^k_\alpha |X|dx'\right|\\
&\le &
\left|\int_{B_1'}\sqrt{1+|\na f_\alpha^k|^2} (\mathcal Mf^k_\alpha -\delta_k)|X|dx'+
\delta_k \int_{B_1'}\sqrt{1+|\na f_\alpha^k|^2} |X|dx'\right|\\
&\le &
\sqrt{1+L^2}\int_{B_1'}(\mathcal Mf^k_\alpha -\delta_k)dx'+
\delta_k \H^{n-1}(\fb u\cap Q)\\
&=&
\sqrt{1+L^2} \int_{\p B_1'}\frac{\na_{x'}f_\alpha^k\cdot n'}{\sqrt{1+|\na_{x'}f_\alpha^k|^2}}-
\delta_k\H^{n-2}(B_1') + \delta_k \H^{n-1}(\fb u\cap Q).
\end{eqnarray*}
Now the result follows from the lower semicontinuity of the first variations  \cite{Simon} Theorem 40.6. 
\subsection{Allard's monotonicity formula}
We recall Theorem 17.6 from \cite{Simon}:
Let $U\subset \R^n$ be a bounded domain and $\xi\in U, 0<\alpha\le 1$, $\Lambda\ge 0$
such that $V$ has bounded first variation, the mean curvature $H$ is a $\mu_V$ integrable function
satisfying  
\begin{equation*}
\frac1{\alpha}\int_{B_\rho(\xi)}|H|\le \lambda\left(\frac\rho R\right)^{\alpha-1}\mu(B_\rho(\xi))\quad \forall \rho\in(0, R), 
\end{equation*}
where $\overline{B_R(\xi)}\subset U$, then $f(\rho)=e^{\Lambda R^{1-\alpha}\rho^\alpha}\frac{\mu(B_\rho(\xi))}{\rho^{n-1}}$ is non decreasing function of $\rho$, and in fact 
\begin{equation*}
f(\sigma)\le f(\rho)-\int_{B_\rho(\xi)\setminus B_{\sigma}(\xi)}\frac{|D^\perp r|^2}{r^{n-1}}.
\end{equation*}
Furthermore, we have the following 
\begin{thm}\label{thm:Allard}
If 
\begin{equation}\label{eq:H-dens}
\int_{B_\rho(\xi)}|H|\le  C(\xi, \alpha) r^{n-2+\alpha}, \quad r\in (0, R), \overline{B_R}(\xi)\subset U
\end{equation} 
then 
$\Gamma$ has a tangent cone at $\xi$. 
\end{thm}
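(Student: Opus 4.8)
The plan is to verify the hypotheses of Allard's monotonicity formula (Theorem 17.6 of \cite{Simon}, recalled just above) and then extract a tangent cone by a standard blow-up and lower-semicontinuity argument. First I would observe that by hypothesis $V=v(\Gamma,\theta)$ has bounded first variation and, by the construction in the previous subsection (Lipschitz multigraphs with nonnegative mean curvature), the associated mean curvature $H$ is a $\mu_V$-integrable function. The density ratio $r\mapsto \mu_V(B_r(\xi))/r^{n-1}$ is bounded above and below by the graph bound $C\ge \H^{n-1}(\Gamma\cap Q)\ge N\H^{n-2}(B_1')$ already used above, so in particular $\mu_V(B_\rho(\xi))\ge c\,\rho^{n-1}$ for small $\rho$; dividing the assumed bound \eqref{eq:H-dens} by this gives
\[
\frac1\alpha\int_{B_\rho(\xi)}|H|\le \frac{C(\xi,\alpha)}{\alpha}\,\rho^{n-2+\alpha}
\le \Lambda\left(\frac\rho R\right)^{\alpha-1}\mu_V(B_\rho(\xi))
\]
for a suitable $\Lambda=\Lambda(\xi,\alpha,R,c)\ge0$, which is precisely the structural hypothesis of the recalled monotonicity theorem.

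With that verified, the conclusion of Allard's theorem yields that $f(\rho)=e^{\Lambda R^{1-\alpha}\rho^\alpha}\mu_V(B_\rho(\xi))/\rho^{n-1}$ is nondecreasing on $(0,R)$, hence has a finite limit $\Theta$ as $\rho\to0^+$; this limit equals the density $\Theta^{n-1}(\mu_V,\xi)$, and the monotonicity identity controls the integral of $|D^\perp r|^2/r^{n-1}$ over annuli. Next I would pass to the blow-up: set $V_{\xi,\lambda}$ to be the rescaled varifolds $(\eta_{\xi,\lambda})_\#V$ with $\eta_{\xi,\lambda}(x)=(x-\xi)/\lambda$. The mass ratio bound makes $\{V_{\xi,\lambda}\}_{\lambda>0}$ have locally bounded mass, so by the compactness theorem for rectifiable varifolds with locally bounded first variation (the first variations of $V_{\xi,\lambda}$ tend to zero because $\int_{B_\rho}|H|=o(\rho^{n-2})$ after rescaling, thanks to the extra $\rho^\alpha$), some sequence $\lambda_j\to0$ gives $V_{\xi,\lambda_j}\to C$ in the varifold sense, with $C$ stationary. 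Finally, the vanishing of $\int_{B_\rho(\xi)\setminus B_\sigma(\xi)}|D^\perp r|^2/r^{n-1}$ in the limit forces $D^\perp r=0$ $\mu_C$-a.e., i.e. the radial vector field is tangent to $C$, which is the infinitesimal statement that $C$ is a cone; this is exactly the argument in \cite{Simon} Theorem 19.3. Hence $\Gamma$ has a tangent cone at $\xi$.

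The main obstacle I anticipate is not the monotonicity formula itself — that is quoted — but making sure the hypotheses genuinely apply at the level of the varifold $V$ rather than of each approximating graph: one needs that $H$ is an honest $\mu_V$-integrable function (not merely a signed measure) and that the lower mass bound $\mu_V(B_\rho(\xi))\ge c\rho^{n-1}$ holds at the specific point $\xi$. The lower bound is where nondegeneracy of the minimizer $u$ at free boundary points (from \cite{ACF-quasi}) must be invoked, together with the fact that $\theta\ge1$ on the support of $\mu_V$. A secondary technical point is the passage from ``first variation bounded'' to ``first variation of the blow-ups $\to0$'': this needs the $\alpha>0$ gain in \eqref{eq:H-dens}, since under rescaling by $\lambda$ the quantity $\int_{B_\rho}|H_\lambda|$ picks up a factor $\lambda^\alpha\to0$, so the limit varifold is stationary; without the $\alpha$ one would only get a limit with bounded (not vanishing) first variation and the cone structure could fail.
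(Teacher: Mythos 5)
Your argument is correct and is essentially the paper's proof: the paper simply cites Simon's Theorem 17.7, Corollary 17.8 (existence of the density) and Theorem 19.3 (blow-up, stationarity of the limit, and the $|D^\perp r|$ term forcing the cone structure), whose content you have unwound in detail. The one ingredient you rightly flag --- the lower mass bound $\mu_V(B_\rho(\xi))\ge c\,\rho^{n-1}$ needed to convert \eqref{eq:H-dens} into the structural hypothesis of Theorem 17.6 --- is left implicit in the paper but follows from the density estimates of \cite{ACF-quasi}.
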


The assumption \eqref{eq:H-dens} implies that the density of $V$ at $\xi$ exists, in view of Theorem 17.7
and Corollary 17.8 \cite{Simon}. Then the existence of tangent cone follows from Theorem 19.3 \cite{Simon}.

\section{Second variation formula}\label{sec:4}
The second variation of the energy for the classical case $f(t)=t$ 
has been computed in 
\cite{CJK}. 
Our computation is more involved due to the nonlinear form of $F$. 
The main result of this section is 
\begin{thm}
Let $u\ge 0$ be a local minimizer of $J_F$ in $B_1$, and $0\in \fb u$ such that 
$\fb u\setminus \{0\}$ is smooth. Then for every $\psi\in C_0^\infty(B_1\setminus\{0\})$ there
holds 
\begin{eqnarray}\label{eq;2nd-var-ineq}
\int_\Gamma H \psi^2
 \le 
 \frac1{F'(1)} \int_{\Om^+(u)} F'(|\na u|^2)\left\{|\na \psi|^2+\frac{2F''(|\na u|^2)}{F'(|\na u|^2)}(\na u\na \psi)^2\right\},
\end{eqnarray}
where $H$ is the mean curvature of $\fb u$. 
\end{thm}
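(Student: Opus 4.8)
The plan is to compute the second variation of $J_F$ under a family of domain perturbations that move the free boundary along the normal direction, following the strategy of \cite{CJK} but carefully tracking the nonlinear terms coming from $F$. Concretely, fix $\psi\in C_0^\infty(B_1\setminus\{0\})$, let $e$ be the unit outward normal field extended smoothly to a neighborhood of the smooth part of $\fb u$, and consider the flow $\Phi_t(x)=x+t\psi(x)e(x)+O(t^2)$ (the second-order term chosen so that $\Phi_t$ is volume-type controlled, or simply work with the diffeomorphism $x\mapsto x+t\psi e$ and expand). Set $u_t=u\circ\Phi_t^{-1}$, which is an admissible competitor since $u_t\ge 0$ and $u_t-u_0\in W^{1,F}_0$. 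Minimality gives $\frac{d^2}{dt^2}\big|_{t=0}J_F(u_t)\ge 0$, and the whole task is to identify this second derivative with the difference of the two sides of \eqref{eq;2nd-var-ineq}.

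First I would handle the measure term $\lambda^2\int_{\Om}\I{u_t}=\lambda^2|\{u_t>0\}|=\lambda^2|\Phi_t(\{u>0\})|$. Expanding the Jacobian of $\Phi_t$ to second order gives a bulk integral of $\div(\psi e)$ and $(\div(\psi e))^2-\mathrm{tr}((D(\psi e))^2)$ type terms; converting the first-order part via the divergence theorem onto $\Gamma$ produces the normal-speed $\psi$ and, after a further normal-derivative computation, exactly the mean curvature term $\lambda^2\int_\Gamma H\psi^2$ at second order (the first-order term vanishes by the optimality/Bernoulli condition together with the Dirichlet term's first variation). Since we have normalized $\lambda^2=2F'(1)-F(1)$ and $|\na u|=1$ on $\Gamma$, this is where the normalizing constant enters; I would keep $F'(1)$ explicit so that dividing through by it at the end yields the stated form. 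The second, harder piece is the gradient term $\int F(|\na u_t|^2)$. Writing $\na u_t=(D\Phi_t^{-1})^T(\na u)\circ\Phi_t^{-1}$ and changing variables back to $\{u>0\}$, one gets $\int_{\{u>0\}}F\big(|(D\Phi_t)^{-T}\na u|^2\big)\,\mathrm{det}D\Phi_t$. Expanding to second order in $t$ produces, besides the obvious $F'(|\na u|^2)$ factor multiplying a quadratic form in $\na\psi$ and $\na u\cdot\na\psi$ (the desired right-hand side integrand), several first-order terms and cross terms involving $D^2u$; these must be integrated by parts using the Euler–Lagrange equation $\L u=\div(2F'(|\na u|^2)\na u)=0$ in $\{u>0\}$ to cancel against corresponding boundary contributions on $\Gamma$.

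The main obstacle — and the point where the nonlinearity genuinely complicates \cite{CJK} — is the integration-by-parts bookkeeping for the gradient term: in the classical case $F(t)=t$ one uses that the Laplacian is self-adjoint and the divergence theorem collapses everything cleanly, whereas here the relevant operator is the quasilinear $\L$ with coefficients $a_{ij}=F'(|\na u|^2)\delta_{ij}+2F''(|\na u|^2)\p_i u\p_j u$, so the "self-adjointness" identity one needs is replaced by a more delicate computation. I would isolate the terms that are total divergences, apply the divergence theorem to move them to $\Gamma$, and show the resulting boundary integrals combine with the first-variation-of-$\lambda^2$-term so that all first-order contributions cancel by the free boundary condition, leaving at second order precisely $\lambda^2\int_\Gamma H\psi^2$ on the boundary side and $\int_{\{u>0\}}F'(|\na u|^2)\{|\na\psi|^2+\frac{2F''}{F'}(\na u\na\psi)^2\}$ plus the $H\psi^2$ term on the bulk side. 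Dividing the inequality $\frac{d^2}{dt^2}J_F(u_t)\ge 0$ by $F'(1)=F'(|\na u|^2)|_\Gamma$ and rearranging then gives \eqref{eq;2nd-var-ineq}. One subtlety worth flagging in the writeup is the behavior near the origin: since $\psi$ is supported in $B_1\setminus\{0\}$ and $\fb u\setminus\{0\}$ is smooth, all the above manipulations take place on the smooth region and no issue arises at $0$, but I would state this explicitly to justify the divergence-theorem applications.
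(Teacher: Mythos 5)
Your overall plan — perturb, expand to second order, use minimality — is of course the right genre, but the specific competitor you chose does not produce the stated inequality, and this is a genuine gap rather than a bookkeeping issue. The right-hand side of \eqref{eq;2nd-var-ineq} is a bulk quadratic form in $\na\psi$ over all of $\Om^+(u)$, with $\psi$ an \emph{arbitrary} test function. This structure comes from a competitor whose deviation from $u$ in the bulk is \emph{exactly} $-\e\psi$. Your competitor $u_t=u\circ\Phi_t^{-1}$ with $\Phi_t=\mathrm{id}+t\psi e$ deviates from $u$ by $-t\,X\cdot\na u+O(t^2)=-t\,\psi\,\p_e u+O(t^2)$, and since the first variation vanishes at a minimizer, the second derivative $\tfrac{d^2}{dt^2}J_F(u_t)|_{t=0}$ is the second variation evaluated at the perturbation $\phi:=-\psi\,\p_e u$, not at $\psi$. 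Equivalently, after the change of variables the $t^2$-coefficient of $\int F(|(D\Phi_t)^{-T}\na u|^2)\det D\Phi_t$ is a quadratic form in the full matrix $DX=e\otimes\na\psi+\psi\,De$ contracted against $\na u$, together with Jacobian terms $(\mathrm{div}X)^2-\mathrm{tr}((DX)^2)$; it contains $\psi^2$ times derivatives of the extended normal field $e$ and never produces $|\na\psi|^2$ on its own. A concrete check: take $\psi\equiv 1$ on a region deep inside $\po u$, so $\na\psi=0$ there and the claimed right-hand side gets no contribution from that region, while your expansion still picks up $\mathrm{tr}((De)^2)$-type terms there. Since $\p_e u\ne 1$ away from $\Gamma$ (only $|\na u|=1$ \emph{on} $\Gamma$ holds), there is no integration by parts that converts the inequality you would actually obtain, namely \eqref{eq;2nd-var-ineq} with $\psi$ replaced by $-\psi\,e\cdot\na u$, into the stated one for arbitrary $\psi$; trying to force $X\cdot\na u=-\psi$ instead of $X=\psi e$ runs into the possible vanishing of $\na u$ and destroys the normality of $X$ used on the boundary side. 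A secondary point: $e$ is only canonically defined near the smooth part of $\Gamma$, whereas $\psi$ is supported anywhere in $B_1\setminus\{0\}$, so even setting up your flow requires choices that further contaminate the second-order term.

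The paper's proof avoids all of this by taking the vertical truncation $u_\e=\max(u-\e\psi,0)$. Then on $\Om^+_R(u)$ the integrand is literally $F(|\na u-\e\na\psi|^2)$, whose $\e^2$-coefficient is $A_2=F'(|\na u|^2)|\na\psi|^2+2F''(|\na u|^2)(\na u\na\psi)^2$, i.e.\ exactly the desired right-hand side; the mean curvature term is then extracted from the collar $\{0<u<\e\psi\}$ by parametrizing it as $z-t\nu(z)$, expanding the Jacobian as $1+Ht+O(t^2)$ and the collar width as $t_\e=\e\psi+O(\e^2)$, and the $O(\e)$ terms cancel via the Bernoulli condition $\lambda^2+f(\na u)-\na_\xi f\cdot\na u=0$. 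If you want to salvage your write-up, replace the diffeomorphism flow by this truncation; the collar computation is where the real work (and the mean curvature) lives, and your proposal currently treats it as a routine consequence of the divergence theorem.
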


\begin{remark}\label{rem:Bernoulli}
Recall that the free boundary condition has the form 
\begin{equation}\label{fb-cond-0}
f(\na u)+\lambda^2-\na_\xi f(\na u)\na u=0, \quad f(\xi)=F(|\xi|^2).
\end{equation}
This condition gives that $|\na u|=\lambda^*$ on the free boundary, where 
$\lambda^*$ is determined from $\lambda^2=2F'(|\na u|^2)|\na u|^2-F(|\na u|^2)$. We normalize  the constant $\lambda^*$ such that 
\begin{equation}
|\na u|=1 \ \ \text{on}\ \ \Gamma.
\end{equation}
To do so we take  $v=cu$ for a suitable constant $c$ and consequently we see that  that $|\na v|=1$ on $\Gamma$. 
\end{remark}

\begin{proof}
Take $\e>0$  and $0\le \psi\in C_0^{\infty}(B_1\setminus\{0\})$ and introduce 
\begin{equation}
u_\e=\max(u-\e\psi, 0)=\left\{
\begin{array}{lll}
u-\e\psi & \text{if} \ u>\e\psi, \\
0 & \text{otherwise}.
\end{array}
\right.
\end{equation}
We have 
\begin{eqnarray}\label{f-energy}
\int _{\Omega _{R}^+\left( u_{\varepsilon }\right) }f\left( \nabla u_{\varepsilon }\right) 
&=&
\int _{\left\{ u > \varepsilon \psi\right\} \cap B_R}f\left( \nabla (u-{\varepsilon }\psi)\right)\\\nonumber
&=&
\int _{\Omega _{R}^+\left( u\right) }f\left( \nabla (u-{\varepsilon }\psi)\right)-\int_{\{0<u<\e \psi\}\cap B_R} f\left( \nabla (u-{\varepsilon }\psi)\right)\\\nonumber
&=&
I_1-I_2, 
\end{eqnarray}
where 
\[
I_1=\int _{\Omega _{R}^+\left( u\right) }f\left( \nabla (u-{\varepsilon }\psi)\right),
\]
and 
\[
I_2=\int_{\{0<u<\e \psi\}\cap B_R} f\left( \nabla (u-{\varepsilon }\psi)\right).
\]
We first simplify $I_1$. 
Let us expand $f\left( \nabla u-{\varepsilon }\na\psi\right) $ in $\e$ in order to get 
\begin{equation}\label{eq:blya-1345}
f\left( \nabla u-{\varepsilon }\na \psi\right) =A_0+\e A_1+\e^2A_2+O(\e^3).
\end{equation}
In fact the coefficients $A_0, A_1, A_2$ can be computed explicitly. 
Precisely, if $f(\na u)=F(|\na u|^2)$ then  
it follows 
\begin{eqnarray*}
f\left( \nabla u-\varepsilon \nabla \psi \right) 
&=&
F\left( \left| \nabla u\right| ^{2}-2\varepsilon \nabla u\nabla \psi +\varepsilon ^{2}\left| \nabla \psi \right| ^{2}\right) \\
&=&
F\left(|\na u|^2+\e(\e|\na \psi|^2-2\na u\na u)\right)\\
&=& F(|\na u|^2)+F'(|\na u|^2)\e(\e|\na \psi|^2-2\na u\na u)\\
&&
+\frac{F''(|\na u|^2)}2\e^2(\e|\na \psi|^2-2\na u\na u)^2+ O(\e^3)\\
&=&
F(|\na u|^2)-2F'(|\na u|^2)\na u\na \psi \e\\
&&+\left[F'(|\na u|^2)|\na \psi|^2+2F''(|\na u|^2)(\na u\na \psi)^2\right]\e^2 +O(\e^3)\\
&=&
A_0+A_1\e+A_2\e +O(\e^3)
\end{eqnarray*}
where 
\begin{eqnarray}\label{def:A0}
A_0
&=&
F(|\na u|^2), \\ \label{def:A1}
A_1
&=&
-2F'(|\na u|^2)\na u\na \psi, \\ \label{def:A2}
A_2
&=&
F'(|\na u|^2)|\na \psi|^2+2F''(|\na u|^2)(\na u\na \psi)^2.
\end{eqnarray}

Consequently 
\begin{eqnarray}\label{Taylor-1}
I_1=\int _{\Omega _{R}^+\left( u\right) }f\left( \nabla (u-{\varepsilon }\psi)\right)
=
\int _{\Omega _{R}^+\left( u\right) } \left[A_{0}+A_{1}\varepsilon +A_{2}\varepsilon ^{2}+O\left( \varepsilon ^{3}\right) \right]
\end{eqnarray}

\medskip

Next, we simplify  $I_2$. We  assume that $\fb u$ is parametrized by $z(s), s\in D$ for some domain $D\subset \R^{n-1}$ then we can express the points of $\{0<u<\e\psi\}$ as
$x(s, t)=z(s)-\nu(z(s))t$ where $\nu(z)$ is the unit outer normal of $\{u>0\}$ at $z$ and $0<t<t_\e(z(s))$.   
Using \eqref{eq:blya-1345} we obtain

\begin{eqnarray}\nonumber
\int _{\left\{ 0 < u < \varepsilon \psi \right\} \cap B_{R}}f\left( \nabla u-\varepsilon \nabla \psi \right) 
&=&
\int _{\left\{ 0 < u < \varepsilon \psi \right\} \cap B_{R}} \left(\sum_{i=0}^2\e^i A_i(x)+O(\e^3)\right)dx
\\ \label{blya-11}
&=&
\int_D \int^{t_\e(z)}_0 \left(\sum_{i=0}^2\e^i A_i(z+t_\e(y)\na u(z))+O(\e^3)\right)\|\det J(s, t)\|dtds,
\end{eqnarray}
where $\mathcal J(z, t)=\frac{\p(x)}{\p(s, t)}$ is the transformation matix. Note that at $s_0\in D$ we can rotate the coordinate system such that at $z(s_0)$ we have $\na u(z(s_0))=-e_n$. Then it follows that at $s_0$ we have 

\begin{eqnarray}\nonumber
\|\mathcal J(s_0, t)\|
&=&
\det
\begin{Vmatrix}
    z_{s_1}^1+tu_{1m}z^m_{s_1} & z_{s_2}^1+tu_{1m}z^m_{s_2}  & \dots  & 0 \\
z_{s_1}^2+tu_{2m}z^m_{s_1} & z_{s_2}^2+tu_{2m}z^m_{s_2}  & \dots  & 0 \\
    \vdots &   \vdots & \ddots &\vdots \\
z_{s_1}^n+tu_{nm}z^m_{s_1} & z_{s_2}^n+tu_{nm}z^m_{s_2}  & \dots  & -1 \\
\end{Vmatrix}\\\nonumber
&=&
\| z^i_{s_j}\|\|(\delta_{ij}+tD^2_{s_is_j}u)\| \quad 1\le i, j\le n-1\\
&=&
 \| z^i_{s_j}\|(1+t\sum_{i=1}^{n-1}u_{ii}+O(t^2)).\label{vol-term}
\end{eqnarray}

Recall that \cite{GT} section 14.6 the mean curvature is 
\begin{eqnarray}\label{mean-curv}
H(z(s_0))
&=&
\frac1{|\na u (z(s_0))|}\left(\Delta u(z(s_0)) -\frac{\na u(z(s_0)) D^2 u(z(s_0))\na u(z(s_0))}{|\na u(z(s_0))|^2}\right)\\\nonumber
&=&
\sum_{i=1}^{n-1}u_{ii}(z(s_0).
\end{eqnarray}

Returning to \eqref{blya-11} and noting that $\|z^i_{s_j}\|ds=d\H^{n-1}(z), 1\le i, j\le n-1$
we get 

\begin{eqnarray}\nonumber
\int _{\left\{ 0 < u < \varepsilon \psi \right\} \cap B_{R}}f\left( \nabla u-\varepsilon \nabla \psi \right) 
=
\int_\Gamma \int^{t_\e(z)}_0 \left(\sum_{i=0}^2\e^i A_i(z+t\na u(z))+O(\e^3)\right) (1+t\sum_{i=1}^{n-1}u_{ii}+O(t^2))dtd\H^{n-1}.
\end{eqnarray}

From Taylor's formula we have 
\begin{eqnarray*}
A_{i}(z+t \na u(z))
=
A_{i}\left( z \right) 
+
t \nabla A_{i}\left( z \right) \nabla u\left( z\right) 
+
\dfrac {t^{2}}{2}(\na^2 A_i(z)\na u(z))\na u(z)+O(t^3).
\end{eqnarray*}
Let $H$ be the mean curvature of $\Gamma$ and introduce 
$$I_{2i}=\int _{\left\{ 0 < u < \varepsilon \psi \right\} \cap B_{R}} A_i, \quad i=0, 1,2,$$
so that 
\[
I_2=\sum_{i=0}^2\e^{i+1}I_{2i}.
\]
We have from above computations  
\begin{eqnarray*}
I_{2i}
&=&
\int_\Gamma\int_0^{t_\e}
\left\{
A_{i}\left( z\right) 
+
t\nabla A_{i}\left( z \right) \nabla u\left( z\right) 
+
\dfrac {t^{2}}{2}(\na^2 A_i(z)\na u(z))\na u(z)
+
O(t^3)
\right\}
\left[1+Ht+O(t^2)\right]dtd\H^{n-1}\\
&=&
\int_\Gamma A_i\left(t_\e+\frac{H}2 t_\e^2+O(t_\e^3)\right)
+
\int_\Gamma \na A_i \na u \left( \frac{t_\e^2}2+ H\frac{t_\e^3}3+O(t_\e^4) \right).
\end{eqnarray*}

Note that $u(z+t_\e\na u(z))=\e\psi(z+t_\e\na u(z))$ on $\p\{u>\e\psi\}$, hence taking Taylor's expansion
we obtain 
\begin{equation}\label{t-expand}
t_\e=\e\psi+\e^2\left(-\psi^2\frac{u_{\nu\nu}}2-\psi\psi_\nu\right)+O(\e^3).
\end{equation}
From here 

\begin{eqnarray*}
I_{2i}
&=& 
\int_\Gamma A_i\e\psi+\int_\Gamma A_i\left[\frac{H}2\e^2\psi^2+\e^2\left(-\psi^2\frac{u_{\nu\nu}}2-\psi\psi_\nu\right)\right]\\
&&
+\int_\Gamma \na A_i\na u\frac{\e^2\psi^2}2 +O(\e^3).
\end{eqnarray*}

\medskip

Denoting $\widetilde H=\frac12(H-u_{\nu\nu})$ we further  simplify 
\begin{eqnarray}\nonumber
I_2&=&
\sum_{i=0}^2 \e^{i+1}\int_\Gamma \left\{A_i\psi+\e A_i\left[{\widetilde H}\psi^2-\psi\psi_\nu\right]+\e\na A_i\na u\frac{\psi^2}2\right\}+O(\e^3) \\\nonumber
&=&
\e\int_\Gamma \left\{A_0\psi+\e A_0\left[{\widetilde H}\psi^2-\psi\psi_\nu\right]+\e\na A_0\na u\frac{\psi^2}2\right\}\\\nonumber
&&
- \e^2\int_\Gamma \left\{A_1\psi+\e A_1\left[{\widetilde H}\psi^2-\psi\psi_\nu\right]+\e\na A_1\na u\frac{\psi^2}2\right\}+O(\e^3)\\
&=&\label{blya-12}
\e\int_\Gamma A_0\psi-\e^2\int_\Gamma A_1\psi+A_0\left[{\widetilde H}\psi^2-\psi\psi_\nu\right]+
\na A_0\na u\frac{\psi^2}2+O(\e^3).
\end{eqnarray}

Plugging \eqref{Taylor-1} and \eqref{blya-12} into \eqref{f-energy}
we get the formula
\begin{eqnarray}\label{I-sum}
\int_{\Omega^+_R(u_\e)}f(\na u_\e)
&=&
I_1-I_2.
\end{eqnarray}

From \eqref{vol-term}, \eqref{mean-curv} and \eqref{t-expand} we get 
\begin{eqnarray}\nonumber
|\{0<u<\e\psi\}\cap B_R|
&=&
\int_\Gamma\int_0^{t_\e}(1+Ht+O(t^2))dtd\H^{n-1}=\int_\Gamma(t_\e+H\frac{t^2_\e}2+O(t_\e^3))d\H^{n-1}\\
&=&\nonumber
\int_\Gamma(\e\psi+\e^2\left(-\psi^2\frac{u_{\nu\nu}}2-\psi\psi_\nu\right)+\e^2\frac H2 \psi^2) +O(\e^3)\\
&=&\label{small-vol}
\int_\Gamma\e\psi+\e^2(\widetilde H\psi^2-\psi\psi_\nu) +
O(\e^3).
\end{eqnarray}

Now the comparison of energies yields

\begin{eqnarray}\nonumber
0\ge J_F(u)-J_F(u_\e)
&\stackrel{\eqref{Taylor-1}}{=}&\nonumber
\int_{B_R} F(|\na u|^2)+\lambda^2\I {u}-\int_{\Omega^+_R(u_\e)}F(|\na u_\e|^2)-\lambda^2\int_{B_R}\I{u-\e\psi}\\\nonumber
&\stackrel{\eqref{I-sum}}{=}&\label{2nd-var-main}
\int_{B_R} F(|\na u|^2)-I_1\\ \nonumber
&&+I_2+\lambda^2\int_{B_R}\chi_{\{0<u<\e\psi\}}\\ 
&\stackrel{\eqref{small-vol}}{=}&
-\e\int_{\Om^+_R(u)} A_1-\e^2\int_{\Om^+_R(u)} A_2+\\ \nonumber
&& +\e\int_\Gamma A_0\psi+\e^2\int_\Gamma A_1\psi+A_0(\widetilde H\psi^2-\psi\psi_\nu)+\na A_0\na u\frac{\psi^2}2\\ \nonumber
&&
+\lambda^2\int_\Gamma\e\psi+\e^2(\widetilde H\psi^2-\psi\psi_\nu) +
O(\e^3).
\end{eqnarray}

Recall the free boundary condition  \eqref{fb-cond-0}
\begin{equation*}
\lambda^2+ f(\na u)-\na_\xi f\na u=0, 
\end{equation*}
hence it follows from the divergence theorem and \eqref{def:A0}-\eqref{def:A1} that the coefficient of $\e$ in the expression above vanishes. 
On the other hand we see that the coefficient of $\e^2$ in \eqref{2nd-var-main} is

\begin{eqnarray*}
-\int_{\Om^+_R(u)} A_2
+
\int_\Gamma A_1\psi
+
A_0(\widetilde H\psi^2-\psi\psi_\nu)+\na A_0\na u\frac{\psi^2}2
+
\lambda^2\int_\Gamma(\widetilde H\psi^2-\psi\psi_\nu).
\end{eqnarray*}
Thus letting $\e\to 0$ we obtain the inequality

\begin{eqnarray}\label{blya-ineq-1}
\int_\Gamma A_1\psi
+
A_0(\widetilde H\psi^2-\psi\psi_\nu)+\na A_0\na u\frac{\psi^2}2
+
\lambda^2\int_\Gamma(\widetilde H\psi^2-\psi\psi_\nu)\le \int_{\Om^+_R(u)} A_2.
\end{eqnarray}

From \eqref{def:A2} we have that 
\begin{eqnarray}\label{blya-A2}
\int_{\Om^+_R(u)} A_2
&=&
 \int_{\Om^+_R(u)} F'(|\na u|^2)|\na \psi|^2+2F''(|\na u|^2)(\na u\na \psi)^2\\\nonumber
 &=&
  \int_{\Om^+_R(u)} F'(|\na u|^2)\left\{|\na \psi|^2+\frac{2F''(|\na u|^2)}{F'(|\na u|^2)}(\na u\na \psi)^2\right\}.
\end{eqnarray}
%
Denoting 
\[
I_3=\int_\Gamma A_1\psi
+
A_0(\widetilde H\psi^2-\psi\psi_\nu)+\na A_0\na u\frac{\psi^2}2
+
\lambda^2\int_\Gamma(\widetilde H\psi^2-\psi\psi_\nu)
\]
and recalling \eqref{def:A0},\eqref{def:A1} we get 
\begin{eqnarray*}
I_3
&\stackrel{\eqref{def:A0},\eqref{def:A1}}{=}&
\int_\Gamma  -2F'\na u\na \psi\psi-F\psi\psi_\nu-\lambda^2\psi\psi_\nu\\
&&+
\int_\Gamma \left[F\widetilde H+ \lambda^2\widetilde H+F'\na uD^2u\na u\right]\psi^2\\
&=&
\int_\Gamma  (2F' -F-\lambda^2)\psi_\nu\psi\\
&&+
\int_\Gamma \left[F\widetilde H+ \lambda^2\widetilde H+F'\na uD^2 u \na u\right]\psi^2\\
&\stackrel{\eqref{fb-cond-0}, \eqref{mean-curv}}{=}&
\int_\Gamma \left[F\widetilde H+ \lambda^2\widetilde H+F'u_{\nu\nu}\right]\psi^2\\
&\stackrel{\eqref{fb-cond-0}}{=}&
\int_\Gamma \left[2F'\widetilde H+F'u_{\nu\nu}\right]\psi^2\\
&=&
\int_\Gamma \left[F'(H-u_{\nu\nu})+F'u_{\nu\nu}\right]\psi^2\\
&=&
\int_\Gamma F'H\psi^2\\
&=&F'(1)\int_\Gamma H\psi^2.
\end{eqnarray*}

Combining this with \eqref{blya-A2} and \eqref{blya-ineq-1} we obtain \eqref{eq;2nd-var-ineq}.
\end{proof}

\section{Stability operator }\label{sec:5}

Let $a_{ij}$ be the matrix 
\begin{equation}
a_{ij}= \frac{F'(|\na u|^2)}{F'(1)} \left\{\delta_{ij}+\frac{2F''(|\na u|^2)}{F'(|\na u|^2)}u_iu_j\right\}
\end{equation}
and define the operator 
\begin{equation}
L[\psi]=\div(a\na \psi).
\end{equation}
Then \eqref{eq;2nd-var-ineq} tells us that 
\begin{equation}
\int_{\Om^+(u)}(L[\psi]+H\psi)\psi\le 0, \quad \forall \psi\in C_0^\infty(B_1\setminus\{0\}).
\end{equation}
Thus $L[\psi]+H\psi$ is the stability operator for the minimization problem.

We can make our computation more explicit.
For instance if $f(\xi)=|\xi|^p, 2<p<\infty$ then 
\begin{eqnarray*}
\left| \nabla \left( u-\varepsilon \psi \right) \right| ^{p}
&=&
\left( \left| \nabla u\right| ^{2}-2\varepsilon \nabla u\nabla \psi +\varepsilon ^{2}\left| \nabla \psi \right| ^{2}\right) ^{p/2}\\
&=&
\left( \left| \nabla u\right| ^{2}+\varepsilon \left( \varepsilon \left| \nabla \psi \right|^2-2\nabla u\nabla \psi \right) \right) ^{p/2}\\
&=&
\left| \nabla u\right| ^{p}+\dfrac {p}{2}\left| \nabla u\right| ^{p-2} \varepsilon \left( \varepsilon \left| \nabla \psi \right| ^{2}-2\nabla u\nabla \psi \right) \\
&&
+\frac12 \dfrac {p}{2}\left( \dfrac {p}{2}-1\right) \left| \nabla u\right| ^{p-4}\varepsilon ^{2}\left[ \varepsilon \left| \nabla \psi \right| ^{2}-2\nabla u\nabla \psi \right] ^{2}+O\left( \varepsilon ^{3}\right)\\
&=& 
\left| \nabla u\right| ^{p}-\varepsilon p\left| \nabla u\right| ^{p-2}\nabla u\nabla \psi 
+\varepsilon ^{2}
\left[
\dfrac {p}{2}\left| \nabla u\right| ^{p-2}\left| \nabla \psi \right| ^{2}+4\dfrac {p}{4}\left( \dfrac {p}{2}-1\right) \left( \nabla u\nabla \psi \right) ^{2}\left| \nabla u\right| ^{p-4}
\right]+
O\left( \varepsilon ^{3}\right) \\
&=& 
\left| \nabla u\right| ^{p}-\varepsilon p\left| \nabla u\right| ^{p-2}\nabla u\nabla \psi +
\varepsilon ^{2}
\left[
\dfrac {p}{2}\left| \nabla u\right| ^{p-2}\left| \nabla \psi \right| ^{2}+\frac{p}2\left( p-2\right) \left( \nabla u\nabla \psi \right) ^{2}\left| \nabla u\right| ^{p-4}
\right]
+
O\left( \varepsilon ^{3}\right) \\
&=&
A_{0}+A_{1}\varepsilon +A_{2}\varepsilon ^{2}+O\left( \varepsilon ^{3}\right), 
\end{eqnarray*}
where 
\begin{eqnarray*}
A_0&=&\left| \nabla u\right| ^{p},\\
A_1&=&-p\left| \nabla u\right| ^{p-2}\nabla u\nabla \psi,\\
A_2&=&
\dfrac {p}{2}\left| \nabla u\right| ^{p-2}\left| \nabla \psi \right| ^{2}+\frac{p}2\left( p-2\right) \left( \nabla u\nabla \psi \right) ^{2}\left| \nabla u\right| ^{p-4}.\\
%
\end{eqnarray*}
Splitting the integral as in \eqref{f-energy}
we get that 
\begin{eqnarray*}
\int _{\Omega _{R}^+\left( u_{\varepsilon }\right) }\left| \nabla u_{\varepsilon }\right| ^{p}
&=&
\int _{\left\{ u > \varepsilon \psi\right\} \cap B_R}\left| \nabla \left( u-\varepsilon \psi\right) \right| ^{p}\\
&=&
\int _{\Omega _{R}^+\left( u\right) }\left| \nabla \left( u-\varepsilon \psi\right) \right| ^{p}-\int_{\{0<u<\e \psi\}\cap B_R} |\nabla \left( u-\varepsilon \psi\right)|^p 
\end{eqnarray*} 
and we can carry over the computation of previous section. 
In this case \eqref{eq;2nd-var-ineq} takes form 
\begin{eqnarray}\label{eq:xxxx}
\int_\Gamma H \psi^2
 \le 
 \int_{\Om^+(u)} |\na u|^{p-2}\left\{|\na \psi|^2+(p-2)\frac{(\na u\na \psi)^2}{|\na u|^2}\right\}.
\end{eqnarray}

\section{Proof of main theorem}\label{sec:6}
First we prove part (a) of Main Theorem.
\begin{thm}\label{thm:Bernstein}
Let $u\ge 0$ be a homogeneous global solution then 
$\fb {u}$ consists of finitely many convex cones.
Moreover, if $\sup{F''(|\na u|^2)}<\frac32 {F'(1)}$ then $u$ is a half-plane solution.   
\end{thm}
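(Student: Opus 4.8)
The plan is to prove Theorem~\ref{thm:Bernstein} in two stages, first dealing with the coarse structure of the free boundary of a homogeneous solution and then running a Bernstein-type argument in $\R^3$ using the stability inequality \eqref{eq;2nd-var-ineq}. For the first part, let $u$ be homogeneous of degree $1$ (the correct homogeneity forced by $|\na u|=1$ on $\Gamma$ together with the scaling of $J_F$). Then $\Gamma=\fb u$ is a cone with vertex at the origin, and on the reduced part $\fbr u$, which by Theorem~6.2 of \cite{ACF-quasi} is relatively open and smooth, the Lemma in Section~\ref{sec:2} shows that each component is a generalized surface of nonnegative mean curvature. A homogeneous hypersurface (cone) with a sign on its mean curvature must in fact be a union of flat pieces meeting the origin, or more precisely the connected components of $\po u$ are convex cones: I would argue this by intersecting with the sphere $\sn$, where the cone condition becomes a mean-convexity condition on a spherical hypersurface, and then invoke that a hypersurface in $\sn$ that bounds a domain and has the correct sign of mean curvature, being also the trace of a minimizer, is totally geodesic on its regular part; convexity of the positivity components follows. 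The finiteness of the number of components is exactly the counting bound established in Section~\ref{sec:3} from the area estimate $\H^{n-1}(\fb u\cap Q)\le C$ of \cite{ACF-quasi}.

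For the second and main part, specialize to $n=3$ and assume $\sup F''(|\na u|^2)<\tfrac32 F'(1)$. The goal is to show the only homogeneous solution is the half-plane solution $u=\max(x_1,0)$, i.e.\ $\Gamma$ is a plane. Suppose not; then by the first part $\Gamma$ is a nontrivial cone and $\fbr u$ contains a smooth $2$-dimensional piece $S$ which, being part of a homogeneous mean-convex surface that is also stationary (Section~\ref{sec:2}), is a minimal cone (generalized minimal surface) over a geodesic net in $\S^2$. I would then test the stability inequality \eqref{eq;2nd-var-ineq} with $\psi=\varphi(r)\eta$, where $r=|x|$, $\varphi$ is a logarithmic cutoff of the type standard in Bernstein-type arguments (supported in an annulus $\{\rho<r<\rho^{1/2}\}$, say, so that $\int F'(|\na u|^2)|\na\varphi|^2\to 0$ as the annulus degenerates), and $\eta$ is chosen to extract the curvature content. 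The crucial quantitative ingredient is the ellipticity-ratio bound: writing $A_2$ as in \eqref{blya-A2}, the coefficient $2F''/F'$ of the $(\na u\na\psi)^2$ term satisfies $1+2F''/F'<1+\tfrac{3}{2F'(1)}\cdot\tfrac{2}{1}$... more precisely the hypothesis is exactly calibrated so that the quadratic form on the right of \eqref{eq;2nd-var-ineq} is controlled by $(1+\delta)$ times the Dirichlet form with $\delta<1$, which is the threshold that makes the $n=3$ Bernstein argument of \cite{CJK} go through. Plugging the logarithmic cutoff in and letting the annulus exhaust $\R^3\setminus\{0\}$ forces $\int_\Gamma H\psi^2\le 0$; but on a genuinely conical $S$ one produces a strictly positive lower bound for a curvature-weighted integral (or, via the $w=1+\p_e u$ argument from the proof of Theorem~\ref{thm:mean-curv}, a Hopf-lemma contradiction at a reduced boundary point), giving the contradiction. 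Hence $\Gamma$ is a plane and $u=\max(x_1,0)$.

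The main obstacle, and the step I would spend the most care on, is turning the stability inequality \eqref{eq;2nd-var-ineq} into an effective Bernstein statement in exactly dimension $n=3$: one must (i) handle the fact that \eqref{eq;2nd-var-ineq} is only valid for $\psi$ vanishing near the vertex $0$, so the cutoff must be a true annular logarithmic cutoff and one needs that the contribution near $0$ is negligible — this uses the homogeneity of $u$ and the $n=3$ borderline nature of the weight $|\na u|^{p-2}|\na\varphi|^2$ (in the power case, $\varphi=\varphi(\log r)$ with $\int (\varphi')^2\,d(\log r)\to 0$); and (ii) verify that the anisotropic term $\tfrac{2F''}{F'}(\na u\na\psi)^2$ does not destroy the sign, which is precisely where the hypothesis $\sup F''<\tfrac32 F'(1)$ enters — it guarantees the relevant ellipticity constant stays below the universal threshold that permits the $3$-dimensional argument of \cite{CJK}. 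The rest — the structure of homogeneous mean-convex free boundaries and the contradiction on a true cone — follows the template already laid down in Section~\ref{sec:2} and in \cite{CJK}, \cite{JS}.
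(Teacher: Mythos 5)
Your first part (finitely many convex cones) is broadly in line with the paper, but the assertion that mean-convexity plus stationarity forces the regular part of the cone to be totally geodesic is unjustified and essentially assumes the conclusion: the whole point of the theorem is to exclude \emph{non-flat} convex cones (compare the genuine non-flat homogeneous free boundary of Section \ref{sec:7}), and nothing short of the quantitative stability argument flattens them. The serious gap, however, is in your choice of test function for the second part. A logarithmic annular cutoff $\varphi(\log r)$ makes the right-hand side of \eqref{eq;2nd-var-ineq} vanish only in dimension $2$ (this is exactly the paper's last theorem in Section \ref{sec:6}); in $\R^3$ the Dirichlet term behaves like $\int(\varphi')^2r^2\,dr$, which for such a cutoff does not tend to zero (it diverges), so ``letting the annulus exhaust $\R^3\setminus\{0\}$'' does not force $\int_\Gamma H\psi^2\le 0$. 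The paper instead takes the borderline-homogeneous test function $\psi_\e(r)=h(r/\e)g(r)r^{-1/2}$, for which both sides of \eqref{eq;2nd-var-ineq} grow like $\log(1/\e)$, and the comparison reduces to the sharp Hardy-type ratio $\lim_{\e\to0}\int(\psi_\e')^2r^2\,dr\big/\int\psi_\e^2\,dr=\tfrac14$. This is precisely where the hypothesis enters: one gets $\int_{\Gamma\cap\S^2}\kappa\le\frac{\widehat C}{4}\H^2(\po u\cap\S^2)$ with $\widehat C=1+2\sup F''/F'(1)$, and the condition $\sup F''<\tfrac32F'(1)$ is exactly $\widehat C<4$. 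Your ``calibration'' sentence gestures at a threshold but never identifies the constant $\tfrac14$ or the mechanism that produces it.

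The closing step is also different from, and more delicate than, what you propose. The contradiction does not come from a Hopf-lemma argument or an abstract positive lower bound for a curvature integral on a cone; it comes from the Gauss--Bonnet theorem on $\S^2$. Summing $\H^2(V_i)+\int_{\gamma_i}\kappa=2\pi$ over the $m$ components $V_i$ of $\S^2\cap\{u=0\}$ yields
\[
2\pi(m-2)\le\Bigl(\frac{\widehat C}{4}-1\Bigr)\H^2\bigl(\po u\cap\S^2\bigr),
\]
so $\widehat C<4$ forces $m\le1$; hence the free boundary is a Lipschitz graph near the vertex, and the viscosity-solution regularity theory (\cite{DK}, \cite{Feldman}) removes the remaining possible singularity at $0$. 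Without Gauss--Bonnet, or some substitute converting the total geodesic curvature bound into topological information about the spherical cross-section, your outline does not reach the conclusion.
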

\begin{proof}

First we estimate the geodesic curvature of $\fb u$.
Let us choose 
$g, h\in C^\infty[0, \infty)$ $g, h \ge 0$
where 
$$
g(r)=
\begin{cases}
1 \quad &\text{if} \ 0\le r\le \frac12,\\
0 &\text{if} \  r\ge\frac34,
\end{cases}
$$
and 
\[
h(r)=
\begin{cases}
1 \quad &\text{if} \ 0\le r\le 1,\\
0  &\text{if} \ r\ge2. 
\end{cases}
\]
Let us take  $\psi_\e(r)=h(r/\e)g(r)r^{-1/2}$,  where $\e>0$ is small, and compute the integrals in the 
inequality \eqref{eq;2nd-var-ineq} for this choice of the test function. 
We have 
\[
\int ^{}_{\gamma }\int ^{\infty }_{0}\kappa\left( \alpha \left( s\right) \right) \left[\psi_\e\left( r\right) \right]^{2}drds
\leq 
\widehat C \int ^{+\infty }_{0}\left[ \psi_\e'\left( r\right) \right] ^{2}r^{2}dr\H^2\left(\po u\cap \S^2\right)
\]
where $\gamma=\fb u \cap \S^2$, $\widehat C=1+2\sup\frac{F''(|\na u_0|^2)}{F'(1)}$, and 
$\kappa$ is the geodesic curvature of $\gamma$. Observe that 

\begin{eqnarray*}
\int ^{+\infty }_{0}\left[ \psi_{\varepsilon }'\left( r\right) \right] ^{2}r^{2}dr
&=&
\int ^{+\infty }_{0}\left[ -\dfrac {1}{2}r^{-3/2}h\left( r/\varepsilon \right) g\left( r\right) \right] ^{2}r^{2}dr+O\left( 1\right) \\
&=&
\frac14\int ^{+\infty }_{0}\left[ r^{-1/2}h\left( r/\varepsilon \right) g\left( r\right) \right] ^{2}dr+O\left( 1\right) \\
&=&
\frac14\int ^{+\infty }_{0}\left[\psi_\e(r) \right] ^{2}dr+O\left( 1\right).  \\
\end{eqnarray*}

On the other hand  
\begin{equation*}
\int ^{+\infty }_{0}\left[\psi_\e(r) \right] ^{2}dr=c\log 1/\e +O\left( 1\right) \\
\end{equation*}
for some $c>0$ hence 

\begin{equation*}
\lim_{\e\to 0}\frac{\int ^{+\infty }_{0}\left[ \psi_{\varepsilon }'\left( r\right) \right] ^{2}r^{2}dr}{\int ^{+\infty }_{0}\left[\psi_\e(r) \right] ^{2}dr}=\frac14.
\end{equation*}

Consequently we get 
\begin{equation*}
\int kds\le \frac{\widehat C}4 \H^2\left(\po u\cap \S^2\right).
\end{equation*}

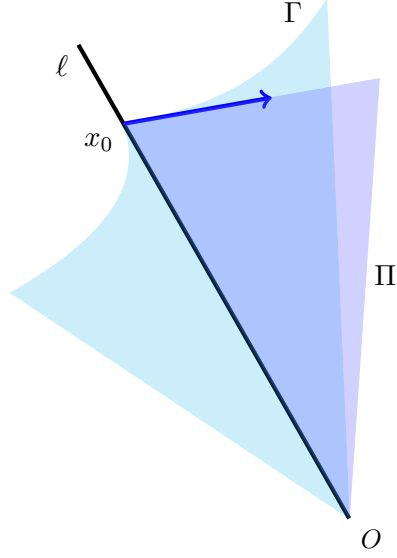
\begin{figure}
\begin{center}
 \begin{tikzpicture}

    \coordinate (O) at (3,-5.25) ;
    \coordinate (A) at (0,0) ; 
        \coordinate (A1) at (-0.6,+1.05) ; 
    \coordinate (B) at (2.7,1.65) ; 
        \coordinate (C) at (-1.5,-2.25) ; 

 \draw[ultra thick] (O) node[below right] {$\resizebox{0.017\hsize}{!}{$O$}$}--(A1) ;  
         \node[below left] at (A)  {{$x_0$}};
                  \node[below left] at (A1)  {$\resizebox{0.01\hsize}{!}{$\ell$}$};

        \filldraw[cyan,opacity=.2] (A) to[out=10,in=237] (B)--(O); 
        \filldraw[cyan,opacity=.2] (A) to[out=-70,in=30] (C)--(O); 

              \draw[->, blue, ultra thick ] (A)-- (10:2) ;
      \filldraw[blue!60!,opacity=.3] (A)--(3.4,0.6)--(O);

                \node[below right] at (2,1.75) {{$\Gamma$}};
                                \node[below right] at (3.2,-1.75) {{$\Pi$}};

\end{tikzpicture}
\end{center}
\caption{Possible singularity at $x_0$. }
\label{fig-3}
\end{figure}

We observe that the free boundary is smooth away from the vertex. Indeed, from \eqref{eq:mincurv-H} we know that the free boundary components are convex cones and suppose 
there is a ray $\ell$ such that $\Gamma$ has singularity along $\ell$, see Figure \ref{fig-3}.
If we blow-up $u$ at some $x_0\in \ell\setminus \{0\}$ then the free boundary of the blow-up limit (which exists thanks to the compactness of blow-up sequences $u_j(x_0+r_j)/r_j, r_j\to 0$ see section 3 \cite{ACF-quasi})
will contain a flat portion $\Pi.$ Then as in the proof of Theorem \ref{thm:mean-curv} we infer that 
the blow-up must have smooth free boundary which is a contradiction.

Now we apply the {Gauss-Bonnet theorem}, \cite{ON} page 375, to infer 
\[
\H^2(V_i)+\int_{\gamma_i}\kappa=2\pi
\]
where $V_i$ is the component of $\S^2\cap \{u=0\}$ and $\kappa$ is the 
curvature of $\gamma_i=\p V_i$. From here we obtain 
\[
4\pi -\H^2(\S^2\cap \Om^+)=2\pi m-\int_{\Gamma\cap \S^2}\kappa
\]
or 

\[
2\pi(m-2)+\H^2(\S^2\cap \Om^+)=\int_{\Gamma\cap \S^2}\kappa\le  \frac{\widehat C}4 \H^2\left(\po u\cap \S^2\right),
\]
which yields the estimate 
\[
2\pi(m-2)\le\left(\frac{\widehat C}4 -1\right)\H^2\left(\po u\cap \S^2\right).
\]
Clearly if $\widehat C<4$ then $m=0, 1$.

Consequently the free boundary is Lipschitz graph near $0$. Since every minimizer is also 
a viscosity solution \cite{DK} then applying the $C^{1, \alpha}$ regularity result for Lipschitz graphs 
\cite{Feldman} we conclude that $\Gamma$ cannot have singularity at $0$.
\end{proof}

\begin{remark}
 For $F(t)=t^{\frac p2}$ we have that 
$\widehat C={p-1}$ hence for 
\[
2<p<5, \quad n=3
\]
the homogeneous free boundary is flat. 
\end{remark}

\subsection{Partial regularity}
Now we prove part (b) of Main Theorem. 
\begin{thm}\label{thm:partial}
Suppose $V=v(\Gamma, \theta)$ has bounded first variation. If $n=3$ and  \eqref{small-ellipt} is satisfied
then $\Gamma$ is smooth away from a  set $\Sigma_0$ of singular points and $\dim(\Sigma_0)\le 1$.
\end{thm}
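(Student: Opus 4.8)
The plan is to combine the tangent cone existence from Theorem~\ref{thm:Allard} with the Bernstein-type classification of Theorem~\ref{thm:Bernstein}, via the standard dimension-reduction argument of Federer. First I would use the hypothesis that $V=v(\Gamma,\theta)$ has bounded first variation together with the mean curvature decay assumption \eqref{eq:H-dens} at a point $\xi$; by Theorem~\ref{thm:Allard} (and the cited Theorems 17.7, 17.8 and 19.3 of \cite{Simon}) the density $\Theta(V,\xi)$ exists and there is a tangent cone $C$ at $\xi$, which is itself a stationary integral varifold with the same structure (the free boundary of a homogeneous global minimizer, by the compactness of blow-up sequences from section~3 of \cite{ACF-quasi}). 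Since $n=3$ and \eqref{small-ellipt} holds, Theorem~\ref{thm:Bernstein} applies to the blow-up limit: its free boundary cone must be a plane, i.e. $C$ is (a multiple of) a hyperplane. This forces $\Theta(V,\xi)=1$, and then Allard's regularity theorem gives that $\Gamma$ is a $C^{1,\alpha}$ graph in a neighbourhood of $\xi$; bootstrapping via the free boundary regularity theory (Theorem~6.1 of \cite{ACF-quasi}, or the $C^{1,\alpha}$-graph result \cite{Feldman}) upgrades this to smoothness. Thus $\Gamma$ is regular at every point where the density equals $1$, which is part (b) of the Main Theorem.

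Next I would address the Hausdorff dimension of the singular set $\Sigma_0$. The idea is the Federer dimension-reduction principle: if $\Sigma_0$ had Hausdorff dimension strictly bigger than $1$, then at a point of positive $\mathcal H^{s}$-density of $\Sigma_0$ (with $s>1$) one could take a blow-up whose singular set still has dimension $\ge s>1$; iterating produces a tangent cone that is translation-invariant along a line, hence determined by its cross-section, a cone in $\R^2$. But in $\R^2$ the free boundary of a homogeneous global minimizer is a union of rays through the origin, and the curvature/Gauss--Bonnet estimate already used in the proof of Theorem~\ref{thm:Bernstein} (now in the one-lower-dimensional setting) forces it to be a single line. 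This means the would-be singular set in $\R^2$ is empty, contradicting dimension $s>1$; hence $\dim_{\mathcal H}(\Sigma_0)\le 1$. Concretely one invokes the abstract stratification theorem for stationary varifolds with the Allard monotonicity formula (Theorem~19.3 and the remarks after it in \cite{Simon}), using that every iterated tangent cone that is not a plane must carry a genuine singularity, which the two-dimensional Bernstein result rules out except on a set of dimension $\le n-3=0$ within each stratum, giving the bound $\le 1$ overall.

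The main obstacle I expect is verifying that the hypotheses needed to run the dimension-reduction machinery really are stable under blow-up: specifically, that the blow-up limit of $V$ is again a varifold of the same type (arising as $\fb{u_0}$ for a homogeneous global minimizer $u_0$ of $J_F$), that it still has bounded first variation with the appropriate mean-curvature decay, and that the ellipticity condition \eqref{small-ellipt} is preserved. The first point follows from the compactness of blow-up sequences $u_j(\xi+r_jx)/r_j$ in $C^\alpha_{loc}$ with Hausdorff convergence of free boundaries (section~3 and Theorem~3.2 of \cite{ACF-quasi}) together with the lower semicontinuity of first variation (Theorem~40.6 of \cite{Simon}); the decay \eqref{eq:H-dens} rescales correctly because $\int_{B_\rho}|H|$ has the right homogeneity; and \eqref{small-ellipt} is a pointwise condition on $F''/F'$ evaluated where $|\na u|\le\lambda^*=1$, so it transfers to any limit. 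Once these compactness and stability facts are in place the argument is a verbatim application of the stratification theorem, so I would state them as lemmas (or cite them from the earlier sections) and keep the proof of Theorem~\ref{thm:partial} short: existence of tangent cones $\Rightarrow$ planarity of tangent cones by Theorem~\ref{thm:Bernstein} $\Rightarrow$ Allard regularity at density-one points $\Rightarrow$ Federer dimension reduction for the bound $\dim\Sigma_0\le 1$.
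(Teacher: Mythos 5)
Your first paragraph reproduces the paper's treatment of the regular part: at any point where the decay \eqref{eq:H-dens} holds, Theorem~\ref{thm:Allard} produces a tangent cone, Theorem~\ref{thm:Bernstein} (with $n=3$ and \eqref{small-ellipt}) forces the cone to be a half-plane, and the flat-implies-smooth theory gives regularity; this is exactly how the paper shows $E(\alpha,\e^\ast)\subset\fbr u$. The divergence, and the gap, is in how you bound $\dim\Sigma_0$. Federer dimension reduction (or Almgren-type stratification) requires homogeneous blow-ups at the singular points you are trying to measure, and in this problem tangent cones are only known to exist where the mean-curvature density decays as in \eqref{eq:H-dens} --- which is precisely the set you have already shown to be \emph{regular}. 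At a general point of $\Gamma$ there is no Weiss-type monotonicity formula for nonlinear $F$ (the paper stresses this is why Allard's formula is used in the first place), so blow-up limits need not be cones, and the stratification theorem for stationary varifolds does not apply: $V$ is not stationary, it only has bounded first variation. Note also that if the dimension-reduction machinery were available in full, the three-dimensional Bernstein theorem would yield an \emph{empty} singular set, not a one-dimensional one; the fact that the theorem only claims $\dim\Sigma_0\le 1$ is a symptom of exactly the obstruction you are glossing over.

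What your argument is missing is a direct measure-theoretic estimate of the ``bad'' set where \eqref{eq:H-dens} fails, and this is the actual content of the paper's proof. There one covers $\Gamma\setminus E(\alpha,\e^\ast)$ by balls $B_i$ of diameter $\le\delta$ satisfying $\int_{B_i\cap\Gamma}|H|\,d\mu_V\ge\e^\ast(\diam B_i)^{n-2+\alpha}$ (these exist by the defining condition \eqref{blya-diuvhfiuhr}), and a Vitali covering argument together with the finiteness of the total mean-curvature mass $\int|H|\,d\mu_V$ (i.e.\ the bounded first variation hypothesis) gives
\[
\H^{n-2+\alpha}_\delta\bigl((\Gamma\cap U)\setminus E(\alpha,\e^\ast)\bigr)\le \frac{C}{\e^\ast}\int_{(\Gamma\cap U)\setminus E(\alpha,\e^\ast)}|H|\,d\mu_V,
\]
whence $\H^{1+\alpha}$ of this set vanishes for $n=3$; since every singular point must lie in $\Gamma\setminus E(\alpha,\e^\ast)$, letting $\alpha\uparrow 1$ gives $\dim\Sigma_0\le 1$. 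This covering estimate is elementary and uses only the hypotheses of the theorem, whereas the route you substitute for it cannot be run without first proving existence of tangent cones at every singular point --- a statement the paper does not have and does not need.
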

\begin{proof}
Let $\alpha\in(0, 1)$ and $\e^\ast>0$ be fixed. Let $E(\alpha, \e^\ast)$ be the set of the points $x\in \Gamma$ where 
\begin{equation}\label{blya-diuvhfiuhr}
\frac1{r^{n-1}}\int_{B_r(x)}Hd\mu_V\ge \e^\ast r^{\alpha-1} \quad \mbox{whenever}\ r<r_x
\end{equation}
 for some small $r_x>0$  depending on $x$. Applying Theorem \ref{thm:Allard} we see
 that $\Gamma$ at $x$ has tangent cone. Then Theorem \ref{thm:Bernstein} implies that 
 that $\Gamma$ is smooth at $x$ if $n=3$. Hence $E(\alpha, \e^\ast)\subset \fbr u$ if $n=3$. 
 
 Suppose that $y\in \Gamma\setminus F(\alpha, \e^\ast)$. Then we can choose balls $B_i$ with following properties;  $\Gamma\setminus F(\alpha, \e^\ast)\subset \cup_i B_i$, $\diam B_i\le \delta$ for some small $\delta>0$ and
 \[
 \int_{B_i\cap \Gamma}Hd\mu_V\ge \e^\ast (\diam B_i)^{n-2+\alpha}.
 \]
 Then using Vitali's covering theorem Theorem 3.3 \cite{Simon} we see that the $\delta$ Hausdorff premeasure satisfies the estimate
 \[
 \H^{1+\alpha}_\delta((\Gamma\cap U)\setminus E(\alpha, \e^\ast))\le \int_{(\Gamma\cap U)\setminus E(\alpha, \e^\ast)}Hd\mu_V.
 \]
 Sending $\delta\to 0$ and recalling that 
 $\alpha<1$ we conclude that $\H^{1+\alpha}((\Gamma\cap U)\setminus E(\alpha, \e^\ast))=0$.
Since the singular points must be in the complement of $E(\alpha, \e^\ast)$ then it follows that 
$\dim(\Sigma)\le 1$. 
\end{proof}

\subsection{The smooth free boundaries in $\R^2$ are lines}
Our last result is a simple consequence from the stability inequality 
\eqref{eq;2nd-var-ineq} in $\R^2$ where one can use the trick of logarithmic test function.  
\begin{thm}
Suppose that $u$ is a global minimizer of the energy \eqref{eq:J-quasi} in $\R^2$ such that the free boundary of $u$ is smooth.
Then $u=x_1^+$ after some rotation of the coordinate system.
\end{thm}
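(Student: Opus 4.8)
The plan is to exploit the stability inequality \eqref{eq;2nd-var-ineq} in the two-dimensional setting, where the free boundary $\Gamma=\fb u$ is a smooth curve in $\R^2$, and to use the classical logarithmic cutoff trick to force the geodesic curvature of $\Gamma$ to vanish. First I would observe that, since $n=2$, the quantity $\int_\Gamma H\psi^2$ becomes $\int_\Gamma \kappa\,\psi^2\,d\H^1$, where $\kappa$ is the signed curvature of the curve $\Gamma$, and that by the Lemma of Section \ref{sec:2} (applied through \eqref{eq:mincurv-H}) the curvature satisfies $\kappa\ge 0$ with respect to the outer normal of $\po u$. So the left-hand side of \eqref{eq;2nd-var-ineq} is nonnegative for every admissible $\psi$.

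Next I would insert into \eqref{eq;2nd-var-ineq} a test function of the form $\psi_R(x)=\eta(|x|)$, where $\eta$ is a logarithmic cutoff: $\eta\equiv 1$ on $[1,\sqrt R]$, $\eta(r)=\tfrac{\log(R/r)}{\log\sqrt R}$ on $[\sqrt R, R]$, $\eta$ linear near $r=0$ to stay in $C_0^\infty(B_1\setminus\{0\})$ after rescaling (or, since $u$ is an entire minimizer, one simply takes $\psi_R$ supported in an annulus $B_R\setminus B_{1/R}$ with logarithmic profile at both ends). The key estimate is that the right-hand side of \eqref{eq;2nd-var-ineq}, which is bounded by $\widehat C\int_{\Om^+(u)}|\na\psi_R|^2$ with $\widehat C=1+2\sup\frac{F''}{F'}$ finite by \eqref{eq:F-cond}, satisfies $\int_{\Om^+(u)}|\na\psi_R|^2\le C/\log R\to 0$ as $R\to\infty$ — this is precisely the gain that only happens in dimension two, since the logarithmic energy of the annulus in $\R^2$ tends to zero. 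Hence $\int_\Gamma \kappa\,\psi_R^2\to 0$, while $\psi_R\to 1$ monotonically on $\Gamma$, so by monotone convergence $\int_\Gamma\kappa\,d\H^1=0$. Combined with $\kappa\ge 0$ this gives $\kappa\equiv 0$, i.e. $\Gamma$ is a union of straight lines.

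Once $\Gamma$ is known to be a line (or a union of parallel lines), I would finish as in the proof of Theorem \ref{thm:mean-curv}: choose coordinates so that the line is $\{x_1=0\}$ and $\po u$ lies in $\{x_1>0\}$; then $\mathcal L u=0$ in $\{x_1>0\}$, $u=0$ and $|\na u|=1$ on $\{x_1=0\}$, so writing $u=x_1+g(x')$ (with $x'=x_2$ here) and running the Hopf-lemma argument of Theorem \ref{thm:mean-curv} forces $g\equiv 0$, hence $u=x_1^+$. One should also rule out the case of two or more parallel lines: between two parallel free boundary lines $u$ would be a bounded nonnegative $\mathcal L$-solution on a strip vanishing on both sides with gradient $1$, which is impossible by a straightforward barrier/maximum-principle comparison (or by noting that the strip solution cannot have $|\na u|=1$ on both components simultaneously once it is affine in $x_1$), so $\Gamma$ is a single line.

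The main obstacle is the boundary term that appears when one integrates by parts in the stability inequality — the test function $\psi_R$ must be genuinely admissible, i.e. lie in $C_0^\infty$ away from the (possibly nonempty) singular vertex and decay at infinity, and one must check that the annular logarithmic energy estimate is uniform and that the curvature integral $\int_\Gamma\kappa\,\psi_R^2$ really does capture all of $\Gamma$ in the limit; in particular one needs $\H^1(\Gamma\cap B_R)$ to grow at most linearly in $R$ (which follows from the density/perimeter bound of Theorem 3.2 in \cite{ACF-quasi} used earlier) so that the logarithmic tail contributes negligibly. Handling the inner cutoff near $0$, where the minimizer may a priori have a conical singularity, requires the same logarithmic trick on the inner annulus and the fact — established in Theorem \ref{thm:Bernstein} — that in low dimensions such a vertex cannot occur, or alternatively one simply notes that a point singularity has zero $\H^1$-measure and does not affect the integration by parts.
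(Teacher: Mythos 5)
Your proposal is correct and follows essentially the same route as the paper: the logarithmic cutoff in the stability inequality \eqref{eq;2nd-var-ineq} kills the right-hand side as the outer radius tends to infinity, forcing $\int_\Gamma \kappa=0$, and since $\kappa\ge 0$ the free boundary must be a line. The only difference is the concluding rigidity step --- the paper reflects $u$ oddly across the line and invokes Liouville's theorem, whereas you rerun the Hopf-lemma argument of Theorem \ref{thm:mean-curv} and separately rule out parallel components; both are routine once $\Gamma$ is known to be flat.
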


\begin{proof}
By compactness \eqref{eq;2nd-var-ineq} is valid for Lipschitz $\psi$.
Choose
$$
\psi(r)=
\begin{cases}
 1  & \text{if}\  r\le e^N, \\
2-\frac{\log r}N & \text{if}\  e^N<r\le e^{2N},\\
0  &\text{if}\  r>e^{2N}.
\end{cases}
$$
Therefore we get from \eqref{eq;2nd-var-ineq}
\begin{eqnarray*}
\int_{B_{e^N}\cap \fb u}H
&\le&
\widehat C\int_{B_{e^{2N}}}|\na \psi |^2\\
&=&
2\pi \widehat C\int_{e^N}^{e^{2N}}\frac1{r^2N^2}rdr=\frac{2\pi \widehat C}N\to 0\ \ \text{as}\ \ N\to \infty.
\end{eqnarray*}
Since $H=\kappa\ge 0$ it follows that $\fb u$ is a line $\ell$.
After odd reflection of $u$ across $\ell$ we can apply Liouville's theorem to conclude that 
the reflected function is linear and hence the desired result follows. 
\end{proof}

\section{The double cone solution to  $p$-laplacian in $\R^3$}\label{sec:7}
In this section we consider the case $F(t)=t^{p/2}, 1<p<\infty$. The full regularity 
of minimizers in $\R^2$ is proved in \cite{DP}.  

\subsection{$p$-Legendre equations}
For $n=3$ and $x=\rho (\cos\phi\sin \theta, \sin\phi\sin\theta, \cos\theta )$ we put
 $$u=\displaystyle\rho \max\left(\frac{f(\theta)}{\dot f(\theta_0)}, 0\right), $$
where $f$ is the solution of the differential equation 
\begin{eqnarray}\label{Legendre}
 \ddot f +(f+\cot \theta \dot f)\frac{f^2+\dot f^2}{f^2+(p-1)\dot f^2}+f=0,
\end{eqnarray}
and $\theta_0$ is the only zero of $f$ in $(0, \pi/2]$, cf. \cite{AC} 2.7.

It is worth to point out that for $p=2$ (\ref{Legendre}) is the Legendre equation for $n=1$, \cite{Kellogg} page 127,
and in this case $Q_1(x)=y(x)=1-\frac x2\ln\frac{1-x}{1+x}$.

Rewriting the divergence of a vectorfield $W=(W_1, W_2, W_3)$ 
in spherical coordinates 
\cite{Kellogg} page 183 (12), we get
$$\div W=\frac{1}{\rho^2\sin\theta}\left\{\frac{\partial }{\partial \rho}[\rho^2\sin\theta W_1]
+ \frac{\partial }{\partial \phi}[\rho W_2]
+ \frac{\partial }{\partial \theta} [\rho\sin\theta W_3]\right\}.$$

Moreover,  $\na u=(u_\rho, \frac 1{\rho\sin\theta}u_\phi, \frac 1\rho u_\theta)$,  \cite{Kellogg} page 181 (10), hence 
for $W=|\na u|^{p-2}\na u$ the above formula for divergence yields
\begin{eqnarray*}
 \Delta_p u&=&\div W\\\nonumber
&=& \frac{1}{\rho^2\sin\theta}\left\{\frac{\partial }{\partial \rho}[\rho^2\sin\theta |\na u|^{p-2}u_\rho]
+ \frac{\partial }{\partial \phi}\left[\frac 1{\sin\theta} |\na u|^{p-2}u_\phi\right]
+ \frac{\partial }{\partial \theta} \left[\sin\theta |\na u|^{p-2}u_\theta\right]\right\}.
\end{eqnarray*}
Since $u$ does not depend on $\phi$ and is homogeneous function of degree 1 it follows that
$$
 2 \sin \theta \left[f^2+\dot f^2\right]^{\frac{p-2}2}f+\frac d{d\theta}\left\{\sin \theta \left[f^2+\dot f^2\right]^{\frac{p-2}2}\dot f\right\}=0,
$$
or equivalently 

\[
\ddot f(f^2+(p-1)\dot f^2)+f(2f^2+p \dot f^2)+\cot\theta\dot f (f^2+\dot f^2)=0 
\]
which is (\ref{Legendre}). Thus $\Delta_pu=0$ in $\{u>0\}$ and moreover
$|\na u|=\lambda^\ast$ on $\partial \{u>0\}\setminus \{ 0\}$ 
with $(p-1)^{-\frac1p}=\lambda^\ast$. The free boundary of $u$ is a double cone, see Figure 2.

\begin{figure}
\hspace{-5cm}
\begin{minipage}{0.3\linewidth}
\begin{tikzpicture}[scale=0.8, font=\tiny]

 \fill[orange,opacity=0.5] (0,0) --  (2,1.98) -- (-2,1.98) -- (0,0) -- cycle;
  \fill[orange,opacity=0.5, rotate=180] (0,0) --  (2,1.98) -- (-2,1.98) -- (0,0) -- cycle; 

 \draw[shorten >= 2pt] (0,0) -- (2.02,2);
  \draw[shorten >= 2pt, rotate=180] (0,0) -- (2.02,2);

 \filldraw[draw=black,outer color=orange!40,inner color=orange!5] (0,2) circle[x radius=2,y radius=0.3];
 \filldraw[draw=black,outer color=orange!40,inner color=orange!5, rotate=180] (0,2) circle[x radius=2,y radius=0.3];
\end{tikzpicture}
\end{minipage}
\begin{minipage}{0.3\linewidth}
\includegraphics[scale=0.35]{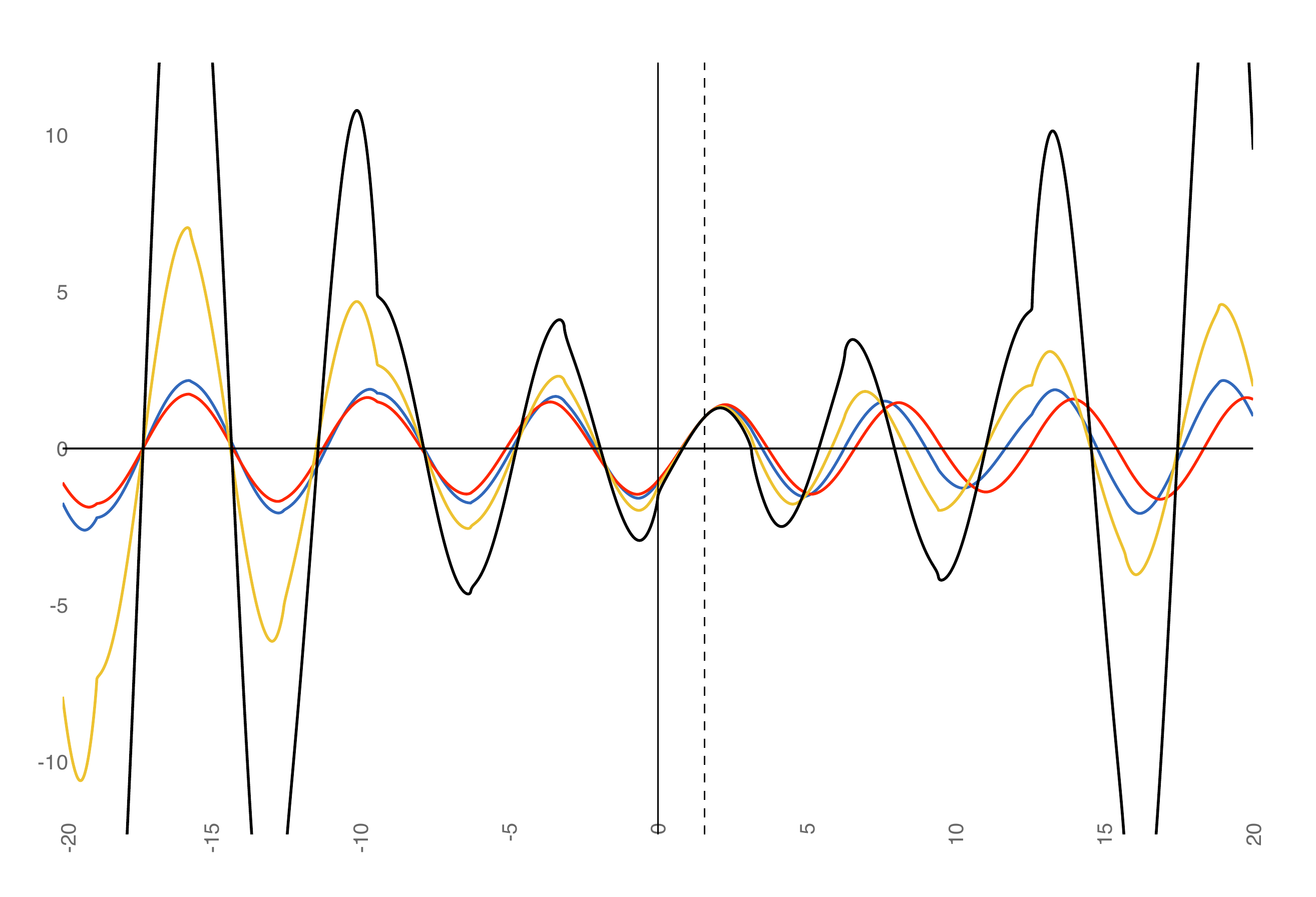}
\end{minipage}
\caption{The double cone (left). The orange region is 
$\{u=0\}$. Some solutions of \eqref{Legendre} with $f(\pi/2)=1, \dot f(\pi/2)=1$ (right): black $p=4.3$, yellow $p=10$, blue $p=29$, red $p=100$. The dashed line is $x=\pi/2$. }
\end{figure}

When $p=2$ it is shown in \cite{AC}  that $u$ is not a minimizer.
Note that the zero set of $u$ has non-trivial Lebesgue density since the free boundary is a cone.

\subsection{Reduction or 1st order ode}

Let us introduce the function $u$ so that $\dot f=u f$ then substituting this into \eqref{Legendre}
we get the following first order equation 
for $u$ 
\begin{eqnarray*}
 \dot u+u^2+(1+u\cot\theta)\frac{1+u^2}{1+(p-1)u^2}+1=0.
\end{eqnarray*}
 In Figure 2 four solutions to \eqref{Legendre} are illustrated for some values of $p$.

\subsection{Stability inequality}
The main result of this section is  the following 
\begin{prop}
Let $u=\displaystyle\rho \max\left(\frac{f(\theta)}{\dot f(\theta_0)}, 0\right)$ where 
$f$ solves \eqref{Legendre} and $\theta_0$ is the only zero of $f$ in $(0, \pi/2].$
Then $u$ is not a minimizer. 
\end{prop}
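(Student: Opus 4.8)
The strategy is to test the stability inequality \eqref{eq;2nd-var-ineq} (equivalently its $p$-Laplacian form \eqref{eq:xxxx}) against a carefully chosen radial cutoff and derive a contradiction with the positivity of the mean curvature of the free boundary, just as in the proof of Theorem~\ref{thm:Bernstein}. The double cone solution $u=\rho\max(f(\theta)/\dot f(\theta_0),0)$ is homogeneous of degree one and its free boundary $\fb u$ consists of the two circular cones $\{\theta=\theta_0\}$ and $\{\theta=\pi-\theta_0\}$. First I would record that if $u$ were a minimizer then, since $\fb u\setminus\{0\}$ is smooth, the inequality \eqref{eq:xxxx} applies, and by the density/compactness argument used in Theorem~\ref{thm:Bernstein} it is valid for Lipschitz test functions $\psi$ supported away from the vertex. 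Separating variables as there, with $\psi_\e(x)=h(r/\e)g(r)r^{-1/2}$ on $\R^3$, the left-hand side becomes $\left(\int_\gamma \kappa\,ds\right)\cdot\left(\int_0^\infty \psi_\e(r)^2\,dr\right)(1+o(1))$ where $\gamma=\fb u\cap\S^2$ is the pair of latitude circles $\{\theta=\theta_0\}\cup\{\theta=\pi-\theta_0\}$, $\kappa$ is their geodesic curvature in $\S^2$, and the right-hand side is $\le \frac{\widehat C}{4}\,\H^2(\po u\cap\S^2)\int_0^\infty\psi_\e^2\,dr\,(1+o(1))$ with $\widehat C=p-1$. Letting $\e\to 0$ yields
\[
\int_{\gamma}\kappa\,ds\le \frac{p-1}{4}\,\H^2\bigl(\po u\cap\S^2\bigr).
\]

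Next I would compute both sides explicitly in terms of $\theta_0$. The cone $\{u=0\}$ is the union of the two spherical caps $\theta\in[0,\theta_0]$ and $\theta\in[\pi-\theta_0,\pi]$ intersected with $\S^2$, so $\po u\cap\S^2$ is the spherical band $\theta_0<\theta<\pi-\theta_0$, whose area is $\H^2(\po u\cap\S^2)=2\pi\cdot 2\cos\theta_0=4\pi\cos\theta_0$. Each latitude circle $\{\theta=\theta_0\}$ on $\S^2$ has length $2\pi\sin\theta_0$ and constant geodesic curvature $\kappa=\cot\theta_0$ (with the sign convention making $\kappa\ge 0$ when measured as the curvature of the free boundary bounding $\po u$, which is the relevant one since $H=\kappa\ge 0$ by the Lemma in Section~\ref{sec:2}); the second circle $\{\theta=\pi-\theta_0\}$ contributes the same by symmetry. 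Hence $\int_\gamma\kappa\,ds=2\cdot 2\pi\sin\theta_0\cdot\cot\theta_0=4\pi\cos\theta_0$. Substituting into the displayed inequality gives
\[
4\pi\cos\theta_0\le \frac{p-1}{4}\cdot 4\pi\cos\theta_0,
\]
i.e. (since $\theta_0\in(0,\pi/2)$ forces $\cos\theta_0>0$) $1\le\frac{p-1}{4}$, that is $p\ge 5$. Therefore for $1<p<5$ the double cone cannot be a minimizer. To cover the remaining range $p\ge 5$ I would instead invoke Theorem~\ref{thm:mean-curv} / the Lemma of Section~\ref{sec:2}: the free boundary components are required to be convex cones with nonnegative outward mean curvature, whereas the cone $\{\theta=\theta_0\}$, when $\{u>0\}$ lies on the side $\theta>\theta_0$, bounds a \emph{non-convex} region, contradicting \eqref{eq:mincurv-H}; alternatively one checks directly that the two cones bounding the double-cone positivity set have mean curvature of the wrong sign for a minimizer. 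This disposes of all $p\in(1,\infty)$.

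The main obstacle is bookkeeping of signs and orientation: one must be careful that the geodesic curvature $\kappa$ appearing on the left of the stability inequality is exactly the mean curvature $H$ of $\fb u$ (as an $(n-1)$-surface in $\R^n$ restricted to the sphere, this is the geodesic curvature of $\gamma$ in $\S^2$), and that the normalization $|\na u|=1$ on $\Gamma$ used throughout the paper is consistent with the scaling of the explicit solution — since the stability inequality \eqref{eq;2nd-var-ineq} is invariant under $u\mapsto cu$ up to the value of $H$ (which scales by $c^{-1}$) and the integral identity is homogeneous, the conclusion $p\ge 5$ is unaffected, but this should be stated. A secondary point is justifying the asymptotics $\int_0^\infty\psi_\e(r)^2\,dr=c\log(1/\e)+O(1)$ and that the cross term and $O(t^2)$, $O(\e^3)$ corrections are genuinely lower order; these are identical to the computation already carried out in Theorem~\ref{thm:Bernstein}, so I would simply refer back to it rather than repeat it.
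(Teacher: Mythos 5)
Your main computation (testing \eqref{eq:xxxx} with $\psi_\e=h(r/\e)g(r)r^{-1/2}$, separating variables, and evaluating $\int_\gamma\kappa\,ds=4\pi\cos\theta_0=\H^2(\po u\cap\S^2)$ explicitly) is a legitimate alternative route and does yield a contradiction, but only in the range $p<5$, as you yourself observe. The paper instead tests the stability inequality with the truncated function $\psi_\delta=\min(1/|x|,1/\delta)$ and exploits the exact identity $\div\bigl(|\na u|^{p-2}\na(1/|x|)\bigr)=0$ away from the origin (a consequence of the degree-one homogeneity of $u$ and the $\theta$-independence of $1/|x|$): the Dirichlet term then reduces by the divergence theorem to $\int_\Gamma\psi_\delta\,\p_\nu\psi_\delta$, which vanishes because $\p_\nu\psi_\delta\propto f(\theta_0)=0$ on $\Gamma$, so the entire right-hand side is zero and $I\le-\int_\Gamma H\psi_\delta^2<0$ for \emph{every} $p$. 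Your $r^{-1/2}$ test function only buys the factor $1/4$, which is why it stops at $p=5$.

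The genuine gap is your fallback argument for $p\ge 5$. You claim the cone $\{\theta=\theta_0\}$ "bounds a non-convex region, contradicting \eqref{eq:mincurv-H}", or alternatively that its mean curvature has "the wrong sign for a minimizer". Both claims are false: each component of $\fb u$ is the boundary of the \emph{convex} solid cone $\{\theta\le\theta_0\}$ (resp.\ $\{\theta\ge\pi-\theta_0\}$), which is a component of the \emph{zero} set. Since competitors $S'$ in \eqref{eq:mincurv-H} must lie in $\po u$, i.e.\ on the outside of that convex body, the projection argument onto the convex body gives $\H^{n-1}(S)\le\H^{n-1}(S')$ automatically, so \eqref{eq:mincurv-H} is satisfied, not violated. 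Likewise, with the orientation fixed in Section \ref{sec:4} ($\nu$ the outer normal of $\po u$, pointing toward the axis), the normal curvature of the latitude direction is $+\cot\theta_0/r>0$, so $H>0$ — this is exactly the sign the paper \emph{needs} in its final step $-\int_\Gamma H\psi_\delta^2<0$, and it is consistent with the Lemma of Section \ref{sec:2} and with Theorem \ref{thm:Bernstein}'s conclusion that free boundary components of homogeneous solutions are convex cones. So no contradiction is available from convexity or from the sign of $H$, and your argument does not cover $p\ge 5$. To repair it you would have to switch to the paper's test function $1/|x|$ (or otherwise kill the Dirichlet term exactly), rather than appeal to the curvature sign.

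A minor secondary point: your bound $\widehat C=p-1$ uses $|\na u|^{p-2}\le 1$ on $\po u$; since you are arguing by contradiction from minimality you may invoke Theorem 4.1 of \cite{ACF-quasi} for this, but it should be said explicitly, as the explicit double-cone solution is not a priori known to satisfy $|\na u|\le 1$.
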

\begin{proof}
Recall \eqref{eq:xxxx} and consider 
\begin{eqnarray}\label{eq:2nd-var-00}
I= \int_{\po u} |\na u|^{p-2}\left\{|\na \psi|^2+(p-2)\frac{(\na u\na \psi)^2}{|\na u|^2}\right\}
 -
 \int_\Gamma H \psi^2. 
\end{eqnarray}
Note that 
\begin{equation}\label{blyaaa-2}
I\le (p-1)\int|\na u|^{p-2}|\na \psi|^2-
 \int_\Gamma H \psi^2.
\end{equation}
Let us take the truncated fundamental solution  
\[
\eta_\delta(x)=
\left\{
\begin{array}{lll}
\frac1{|x|} & \mbox{if}\  |x|>\delta,\\
\frac1\delta & \mbox{if}\ |x|\le \delta, 
\end{array}
\right.
\]
and consider $\psi_{R, \delta}(x)=\eta_\delta(x)\xi(\frac {|x|}R)$,  where $0\le \xi\le 1$ is a standard cut-off function such that 
$\xi(x)=1$ in $B_{1/2}$, $\xi\in C^\infty_0(B_1)$.  

Observe that 
\[
\begin{split}
\lim_{R\to \infty}\int \left| \nabla u\right| ^{p-2}\left| \nabla \psi_{R, \delta} \right| ^{2}
&=
\int_{|x|>\delta}|\na u |^{p-2}\left|\na \left(\frac{1}{|x|}\right)\right|^2\\
&\le 
C\|\na u\|_\infty^{p-2}\frac1\delta.
\end{split}
\]

Denoting $\psi_\delta(x) =\eta_\delta(x)$ and using the  divergence theorem 
we see that 
\begin{eqnarray*}
\int \left| \nabla u\right| ^{p-2}\left| \nabla \psi_\delta \right| ^{2}
&=&
\int _{\Gamma }\psi_\delta\p_\nu \psi_\delta
-
\int \psi_\delta \div  \left( |\nabla u |^{p-2} \nabla \psi_\delta \right) \\
&=&
\int _{\Gamma }\psi_\delta \p_\nu\psi_\delta
\end{eqnarray*}
because 
\begin{equation}\label{eq:fundam}
\div\left(|\na u|^{p-2}\na(\frac{1}{|x|})\right)=0, \quad \rho=|x|>\delta.
\end{equation}
We can see this from the equation 
\begin{eqnarray*}
\div\left(|\na u|^{p-2}\na(\frac{1}{|x|})\right)
=
\frac1{\rho\sin\theta}
\left\{
\frac\p{\p\rho}\left(\rho^2\sin\theta g(\theta)\p_\theta\psi_\delta\right)+\frac \p{\p\theta}\left(\rho\sin\theta\frac1\rho g(\theta)\p_\theta\psi_\delta\right)
\right\}, 
\end{eqnarray*}
where $g(\theta)= \left[f^2(\theta)+\dot f^2(\theta)\right]^{\frac{p-2}2}$.
Since in $\rho>\delta$
\[
\frac\p{\p\rho}\left(\rho^2\sin\theta g(\theta)\p_\theta\psi_\delta\right)
=
-\sin\theta g(\theta)\frac\p{\p\rho}\left(\rho^2 \frac1{\rho^2}\right)=0
\]
and,  moreover,  $\p_\theta\psi_\delta=0$ by definition of $\psi_\delta $, implying that  
\[
\frac \p{\p\theta}\left(\rho\sin\theta\frac1\rho g(\theta)\p_\theta\psi_\delta\right)=0.
\]
Hence \eqref{eq:fundam} is satisfied. 
Thus returning to \eqref{blyaaa-2} we conclude 

\[
\begin{split}
I
&\le
(p-1)\int _{\Gamma }\psi_\delta \p _{\nu}\psi_\delta
- 
 \int_\Gamma H \psi^2_\delta\\
 &=
 (p-1)\int _{\Gamma\cap \{|x|>\delta\} }\dfrac {1}{\rho^{2}}\frac{f}{\sqrt{f^2+(\dot f)^2}}
 - 
 \int_\Gamma H \psi^2_\delta, 
\end{split}
\]
because 
\[
\begin{split}
\p_\nu\psi _\delta
&=
-\dfrac {\nabla u}{\left| \nabla u\right| }\nabla \psi _\delta
=
-\dfrac {1}{\sqrt {f^{2}+\left( f'\right) ^{2}}}\left( -\dfrac {1}{\rho ^{2}},0,0\right) \cdot \left( f,0,\dot f\right) \\
&=\dfrac {1}{\rho^{2}}\frac{ f}{\sqrt{f^2+(\dot f)^2}}. 
\end{split}
\]
But $\theta=\theta_0$ on $\Gamma\setminus B_{\delta}$ and $f(\theta_0)=0, \dot f(\theta_0)\not=0$, so  we have 
\[
\dfrac {1}{\rho^{2}}\frac{f(\theta_0)}{\sqrt{f^2(\theta_0)+(\dot f(\theta_0))^2}}=0.
\]
Summarizing we infer 
\[
I\le - \int_\Gamma H \psi^2_\delta=-\int_\delta^\infty \frac{d\rho}{\rho^2}\int_{\S^2\cap \Gamma}\kappa<0, 
\]
where $\kappa>0$ is the curvature of the circle $\fb u\cap \p B_1$. 
\end{proof}

\begin{bibdiv}
\begin{biblist}

\bib{Allard}{article}{
   author={Allard, William K.},
   title={On the first variation of a varifold},
   journal={Ann. of Math. (2)},
   volume={95},
   date={1972},
   pages={417--491},
   issn={0003-486X},
   review={\MR{0307015}},
   doi={10.2307/1970868},
}

\bib{Almgren}{article}{
   author={Almgren, Fred},
   author={Taylor, Jean E.},
   author={Wang, Lihe},
   title={Curvature-driven flows: a variational approach},
   journal={SIAM J. Control Optim.},
   volume={31},
   date={1993},
   number={2},
   pages={387--438},
   issn={0363-0129},
   review={\MR{1205983}},
   doi={10.1137/0331020},
}

\bib{AC}{article}{
   author={Alt, H. W.},
   author={Caffarelli, L. A.},
   title={Existence and regularity for a minimum problem with free boundary},
   journal={J. Reine Angew. Math.},
   volume={325},
   date={1981},
   pages={105--144},
   issn={0075-4102},
   review={\MR{618549}},
}

\bib{ACF-quasi}{article}{
   author={Alt, Hans Wilhelm},
   author={Caffarelli, Luis A.},
   author={Friedman, Avner},
   title={A free boundary problem for quasilinear elliptic equations},
   journal={Ann. Scuola Norm. Sup. Pisa Cl. Sci. (4)},
   volume={11},
   date={1984},
   number={1},
   pages={1--44},
   issn={0391-173X},
   review={\MR{752578}},
}

\bib{BGM}{article}{
   author={Barozzi, Elisabetta},
   author={Gonzalez, Eduardo},
   author={Massari, Umberto},
   title={The mean curvature of a Lipschitz continuous manifold},
   language={English, with English and Italian summaries},
   journal={Atti Accad. Naz. Lincei Cl. Sci. Fis. Mat. Natur. Rend. Lincei
   (9) Mat. Appl.},
   volume={14},
   date={2003},
   number={4},
   pages={257--277 (2004)},
   issn={1120-6330},
   review={\MR{2104215}},
}

\bib{CJK}{article}{
   author={Caffarelli, Luis A.},
   author={Jerison, David},
   author={Kenig, Carlos E.},
   title={Global energy minimizers for free boundary problems and full
   regularity in three dimensions},
   conference={
      title={Noncompact problems at the intersection of geometry, analysis,
      and topology},
   },
   book={
      series={Contemp. Math.},
      volume={350},
      publisher={Amer. Math. Soc., Providence, RI},
   },
   date={2004},
   pages={83--97},
   review={\MR{2082392}},
   doi={10.1090/conm/350/06339},
}

\bib{TW}{article}{
   author={Dai, Qiuyi},
   author={Trudinger, Neil S.},
   author={Wang, Xu-Jia},
   title={The mean curvature measure},
   journal={J. Eur. Math. Soc. (JEMS)},
   volume={14},
   date={2012},
   number={3},
   pages={779--800},
   issn={1435-9855},
   review={\MR{2911884}},
   doi={10.4171/JEMS/318},
}

\bib{DP}{article}{
   author={Danielli, Donatella},
   author={Petrosyan, Arshak},
   title={Full regularity of the free boundary in a Bernoulli-type problem
   in two dimensions},
   journal={Math. Res. Lett.},
   volume={13},
   date={2006},
   number={4},
   pages={667--681},
   issn={1073-2780},
   review={\MR{2250499}},
   doi={10.4310/MRL.2006.v13.n4.a14},
}
\bib{JdS}{article}{
   author={De Silva, Daniela},
   author={Jerison, David},
   title={A singular energy minimizing free boundary},
   journal={J. Reine Angew. Math.},
   volume={635},
   date={2009},
   pages={1--21},
   issn={0075-4102},
   review={\MR{2572253}},
   doi={10.1515/CRELLE.2009.074},
}

\bib{DK}{article}{
   author={Dipierro, Serena},
   author={Karakhanyan, Aram L.},
   title={Stratification of free boundary points for a two-phase variational
   problem},
   journal={Adv. Math.},
   volume={328},
   date={2018},
   pages={40--81},
   issn={0001-8708},
   review={\MR{3771123}},
   doi={10.1016/j.aim.2018.01.005},
}

\bib{Feldman}{article}{
   author={Feldman, Mikhail},
   title={Regularity of Lipschitz free boundaries in two-phase problems for
   fully nonlinear elliptic equations},
   journal={Indiana Univ. Math. J.},
   volume={50},
   date={2001},
   number={3},
   pages={1171--1200},
   issn={0022-2518},
   review={\MR{1871352}},
   doi={10.1512/iumj.2001.50.1921},
}

\bib{GT}{book}{
   author={Gilbarg, David},
   author={Trudinger, Neil S.},
   title={Elliptic partial differential equations of second order},
   series={Classics in Mathematics},
   note={Reprint of the 1998 edition},
   publisher={Springer-Verlag, Berlin},
   date={2001},
   pages={xiv+517},
   isbn={3-540-41160-7},
   review={\MR{1814364}},
}

\bib{JS}{article}{
   author={Jerison, David},
   author={Savin, Ovidiu},
   title={Some remarks on stability of cones for the one-phase free boundary
   problem},
   journal={Geom. Funct. Anal.},
   volume={25},
   date={2015},
   number={4},
   pages={1240--1257},
   issn={1016-443X},
   review={\MR{3385632}},
   doi={10.1007/s00039-015-0335-6},
}

\bib{Kellogg}{book}{
   author={Kellogg, Oliver Dimon},
   title={Foundations of potential theory},
   series={Reprint from the first edition of 1929. Die Grundlehren der
   Mathematischen Wissenschaften, Band 31},
   publisher={Springer-Verlag, Berlin-New York},
   date={1967},
   pages={ix+384},
   review={\MR{0222317}},
}

\bib{ON}{book}{
   author={O'Neill, Barrett},
   title={Elementary differential geometry},
   publisher={Academic Press, New York-London},
   date={1966},
   pages={viii+411},
   review={\MR{0203595}},
}
\bib{Simon}{book}{
   author={Simon, Leon},
   title={Lectures on geometric measure theory},
   series={Proceedings of the Centre for Mathematical Analysis, Australian
   National University},
   volume={3},
   publisher={Australian National University, Centre for Mathematical
   Analysis, Canberra},
   date={1983},
   pages={vii+272},
   isbn={0-86784-429-9},
   review={\MR{756417}},
}

\bib{Weiss}{article}{
   author={Weiss, Georg S.},
   title={Partial regularity for weak solutions of an elliptic free boundary
   problem},
   journal={Comm. Partial Differential Equations},
   volume={23},
   date={1998},
   number={3-4},
   pages={439--455},
   issn={0360-5302},
   review={\MR{1620644}},
   doi={10.1080/03605309808821352},
}
\end{biblist}
\end{bibdiv}


\end{document}